\newtheorem{theorem}{Theorem}
\newtheorem{corollary}[theorem]{Corollary}
\newtheorem{proposition}[theorem]{Proposition}
\newtheorem{lemma}[theorem]{Lemma}
\newtheorem*{theoremA}{Theorem~A}
\newtheorem*{theoremB}{Theorem~B}
\newtheorem*{theoremC}{Theorem~C}
\newtheorem*{theoremD}{Theorem~D}
\theoremstyle{definition}
\newtheorem{example}[theorem]{Example}
\newtheorem{remark}[theorem]{Remark}
\numberwithin{equation}{section}
\newcommand\mL{L\kern-0.08cm\char39}
\newcommand{\Id}{{\rm Id}}
\newcommand{\Orb}{{\rm Orb}}
\newcommand{\Fix}{{\rm Fix}}
\newcommand{\Per}{{\rm Per}}
\newcommand{\Rec}{{\rm Rec}}
\newcommand{\Min}{{\rm Min}}
\newcommand{\conv}{{\rm conv}}
\newcommand{\C}{{\mathcal C}}
\newcommand{\Homeo}{{\mathcal H}}
\newcommand{\tor}{\mathop{\rm tor}}
\begin{document}

\begin{large}

\title[Minimality for actions of abelian semigroups]{Minimality for actions of abelian semigroups\\ on compact spaces with a free interval}

\author{Mat\'u\v s Dirb\'ak}
\author{Roman Hric}
\author{Peter Mali\v ck\'y}
\author{\mL ubom\'\i r Snoha}
\author{Vladim\'\i r \v Spitalsk\'y}
\address{Department of Mathematics, Faculty of Natural Sciences, Matej Bel University,
Tajovsk\'{e}ho 40, 974 01 Bansk\'{a} Bystrica, Slovakia}

\email{[Matus.Dirbak, Roman.Hric, Peter.Malicky, Lubomir.Snoha, Vladimir.Spitalsky]@umb.sk}

\thanks{This work was supported by the Slovak Research and Development Agency under the contract No.~APVV-15-0439 and by VEGA grant 1/0786/15.}

\subjclass[2010]{Primary 37B05; Secondary 54H20}

\keywords{Abelian semigroup, minimal action, minimal set, free interval, free arc}

\dedicatory{Dedicated to Sergii Kolyada on the occasion of his 60th birthday}

\begin{abstract}
	We study minimality for continuous actions of abelian semigroups
	on compact Hausdorff spaces with a free interval.
	First, we give a necessary and sufficient condition for such a space to admit a minimal action of a given abelian semigroup.	
	Further, for actions of abelian semigroups we provide a trichotomy for the topological structure of minimal sets intersecting a free interval.
\end{abstract}

\maketitle


\section{Introduction}\label{S:intro}

Various aspects of the dynamics of group actions on the circle $\mathbb S^1$ by homeomorphisms are well understood. By the Ghys-Margulis alternative \cite{Mar},
every (effective) action of a group $G$ on the circle either has an
invariant probability measure or contains a free subgroup with two generators. Moreover, by Malyutin's theorem \cite{Mal}, if the action of $G$ is minimal then either $G$ is conjugate to a group of rotations or it is a finite cover of a proximal action (see also Glasner's paper \cite{Gla} for a shortened proof). As far as minimal sets of group actions on $\mathbb S^1$ are concerned, the following trichotomy
holds (see \cite[Proposition~5.6]{Ghy}, \cite[Theorem~3.7]{Bek04}, \cite[Theorem~3.3]{Gla}):
\begin{enumerate}
\item every minimal set is finite;
\item the whole circle $\mathbb S^1$ is a (unique) minimal set;
\item the action has a unique minimal set, which is a Cantor set.
\end{enumerate}

For actions of general semigroups $S$ on the circle by continuous maps, much less is known about minimality and minimal sets, an obvious exception being the completely understood case $S=\mathbb N$. Recall that every minimal map on $\mathbb S^1$ is conjugate to an irrational rotation. Moreover, the following full topological characterization of minimal sets of continuous circle maps holds: a subset of the circle is a minimal set for some circle map if and only if it is either finite or a Cantor set or the whole circle. Contrary to the above trichotomy for group actions on $\mathbb S^1$, a noninvertible circle map may have both finite and Cantor minimal sets.

The minimality for continuous maps is well understood even on spaces with a free interval.
Recall that a \emph{free interval} $J$ in a space $X$ is an open subset of $X$ homeomorphic to the open interval $(0,1)$. By \cite[Theorem~A]{DSS}, every minimal continuous map on a metric continuum $X$ with a free interval is conjugate to an irrational circle rotation. Further,
if $X$ is a compact metric space with a free interval $J$ and $f\colon X\to X$ is a continuous map
then, by \cite[Theorem~B]{DSS}, the following trichotomy holds for every minimal set $M$ of $f$ intersecting $J$:
\begin{enumerate}
	\item $M$ is finite;
	\item $M$ is a disjoint union of finitely many circles;
	\item $M$ is a nowhere dense cantoroid,
\end{enumerate}
where by a \emph{cantoroid} we mean a compact metric space with a dense set of degenerate components and without isolated points.

In the present paper we study minimality for actions, by continuous maps, of abelian semigroups on compact spaces with a free interval. First, we show that in the class of such spaces only finite disjoint unions of circles admit minimal actions of abelian semigroups.

\newcommand{\thmA}{
	Let $X$ be a compact Hausdorff space with a free interval and let $\Phi$ be a minimal action of an abelian semigroup $S$ on $X$. Then $X$ is a disjoint union of finitely many circles and all the acting maps of $\Phi$ are homeomorphisms of $X$.
}
\begin{theoremA}\label{T:cpct-free-disc}
	\thmA
\end{theoremA}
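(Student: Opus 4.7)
I would begin by using the abelian structure to infer basic dynamics. For each $s \in S$, write $f_s := \Phi(s)$. Then $f_s(X)$ is closed in $X$ (by compactness and Hausdorffness) and $S$-invariant, because $f_t(f_s(X))=f_s(f_t(X))\subseteq f_s(X)$ by commutativity; hence minimality forces $f_s(X)=X$ and every acting map is surjective. Next, the density of every orbit together with the openness of $J$ yields $X = \bigcup_{s \in S} f_s^{-1}(J)$, which by compactness of $X$ reduces to a finite open cover $X = \bigcup_{i=1}^{n} f_{s_i}^{-1}(J)$. Thus every point of $X$ has an open neighborhood mapping continuously into $J \cong (0,1)$.

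The main obstacle is to upgrade these continuous local maps to local homeomorphisms, that is, to show that each restriction $f_s|_{f_s^{-1}(J)}$ is locally injective. I would argue by contradiction: assume $f_s$ has a ``folding point'' $x_0 \in f_s^{-1}(J)$, so that pairs $a \neq b$ exist arbitrarily close to $x_0$ with $f_s(a)=f_s(b)$. The closed kernel relation $R_s := \{(x,y) \in X \times X : f_s(x)=f_s(y)\}$ then strictly contains the diagonal $\Delta$, and by the commutativity of $S$ it is invariant under the diagonal $S$-action on $X\times X$. Choose, via Zorn's lemma, an $S$-minimal set $M \subseteq R_s$ that is not contained in $\Delta$. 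Each coordinate projection of $M$ is then a closed $S$-invariant subset of $X$ and hence, by the minimality of $\Phi$, equals $X$. This produces a closed ``pairing'' in $M$ that attaches to each $x\in X$ a partner $y\in f_s^{-1}(f_s(x))$, genuinely distinct from $x$ on an $S$-invariant dense subset. Restricting the pairing to the free interval $J$, where $f_s$ behaves as a continuous real-valued function on an open subset of $(0,1)$ and the pairing is forced to respect the linear order of $J$, should produce a proper closed $S$-invariant subset of $X$, contradicting minimality.

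Once local injectivity, and thus the local homeomorphism property, is established, the finite open cover $\{f_{s_i}^{-1}(J)\}$ exhibits $X$ as a compact Hausdorff $1$-manifold without boundary; by the classification of compact $1$-manifolds, $X$ is a finite disjoint union of circles. For the final assertion, each $f_s$ is then a continuous surjective local homeomorphism of this space, hence a finite-sheeted covering map. Passing to a power $f_s^N$ that stabilizes each circle component, if some restriction $f_s^N|_C$ had covering degree $d$ with $|d|\geq 2$, then $\Fix(f_s^N)$ would be a nonempty proper closed $S$-invariant subset of $X$ (closed by continuity, nonempty because $d\neq 1$ forces fixed points on each component, $S$-invariant by commutativity), contradicting minimality. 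Hence every component restriction has degree $\pm 1$ and $f_s$ is a homeomorphism of $X$.
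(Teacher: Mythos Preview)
Your outline has genuine gaps at the heart of the argument. The central one is the Zorn step: you cannot simply ``choose an $S$-minimal set $M\subseteq R_s$ not contained in $\Delta$.'' The diagonal $\Delta$ is itself closed and $S$-invariant, and the diagonal action on it is conjugate to the given minimal action on $X$, so $\Delta$ is a minimal set of $R_s$. If you start Zorn from a point $(a,b)\in R_s\setminus\Delta$, the orbit closure of $(a,b)$ may well meet $\Delta$ (this is precisely a proximality phenomenon, which you have not ruled out), and then the only minimal set it contains is $\Delta$; nothing excludes $\Delta$ being the \emph{unique} minimal subset of $R_s$. Even granting such an $M$, calling it a ``pairing'' is unjustified: $M$ projects onto $X$ in each coordinate, but there is no reason it is the graph of a map. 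And the sentence ``should produce a proper closed $S$-invariant subset of $X$'' names no such set and gives no mechanism for producing one. Finally, local \emph{injectivity} of $f_s$ on $f_s^{-1}(J)$ does not give the local \emph{homeomorphism} property you need for charts: a continuous injection from an open set of a compact Hausdorff space into $(0,1)$ need not be open, so the cover $\{f_{s_i}^{-1}(J)\}$ does not yet exhibit $X$ as a $1$-manifold.

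For comparison, the paper's route is much more hands-on and avoids all three issues. It first proves, via one-dimensional periodic-point arguments on arcs (Lemmas~\ref{L:eaist.I'}--\ref{L:arc-orb}), that each $\varphi_s$ is injective \emph{on the free interval $J$ itself}; by Lemma~\ref{L:dens.back.orb} this already makes $X$ a finite union of arcs, hence a space in which free intervals are dense. Global injectivity of the $\varphi_s$ is then obtained by a separate argument that works inside a minimal set of a single acting map $\varphi_p$ and invokes the ``redundant open set'' obstruction of \cite{KST}. Only after the $\varphi_s$ are known to be homeomorphisms does the $1$-manifold conclusion follow, since then the images $\varphi_r(J)$ are genuinely open. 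Your final degree argument---once $X$ is a finite union of circles and each $\varphi_s$ is a surjective local homeomorphism---is correct and is a clean alternative for that last step; but the route to that point needs a different plan.
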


If $X$ is connected then we have the following strengthening of Theorem~A.

\newcommand{\thmB}{
	Let $X$ be a compact connected Hausdorff space with a free interval and let $\Phi$ be a minimal action of an abelian semigroup $S$ on $X$. Then $\Phi$ is conjugate to an action of $S$ by rotations on the circle $\mathbb S^1$.
}

\begin{theoremB}\label{T:cpct-free}
	\thmB
\end{theoremB}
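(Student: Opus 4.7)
The strategy is to apply Theorem~A to reduce to an action on the circle, and then use Markov--Kakutani to obtain a common invariant probability measure that encodes the conjugacy to rotations.

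By Theorem~A, $X$ is a finite disjoint union of circles and every element of $\Phi(S)$ is a homeomorphism; connectedness of $X$ forces $X$ to be a single circle, which we identify with $\mathbb S^1$. Thus $\{\Phi(s)\}_{s\in S}$ is a commuting family of homeomorphisms of $\mathbb S^1$. First, I would rule out orientation-reversing elements: if some $\Phi(s)$ were orientation-reversing, $\Fix(\Phi(s))$ would be nonempty (by the Lefschetz formula, as $\Phi(s)$ has degree $-1$), closed, and a proper subset of $\mathbb S^1$, and also $\Phi$-invariant by the commutativity of the family, violating minimality. So every $\Phi(s)$ is orientation-preserving.

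Next, the pushforward operators $\{\Phi(s)_{\ast}\}_{s\in S}$ form a commuting family of continuous affine self-maps of the weak-$\ast$ compact convex space of Borel probability measures on $\mathbb S^1$; the Markov--Kakutani fixed-point theorem then furnishes a common fixed point $\mu$. Minimality of $\Phi$ forces $\supp\mu=\mathbb S^1$ (its support is a nonempty closed $\Phi$-invariant set) and forbids atoms: the set of points of maximal $\mu$-mass is nonempty, finite, and $\Phi$-invariant (since $\Phi(s)$ permutes atoms preserving mass), contradicting minimality on $\mathbb S^1$ unless it is empty.

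Finally, fix a base point $x_0\in\mathbb S^1$ and define $h\colon\mathbb S^1\to\mathbb R/\mathbb Z$ by $h(x)=\mu([x_0,x])$, where $[x_0,x]$ is the positively-oriented arc from $x_0$ to $x$. Full support and atomlessness of $\mu$ make $h$ a homeomorphism, while orientation-preservation of $\Phi(s)$ together with invariance of $\mu$ give
\[
h(\Phi(s)x)\equiv h(x)+h(\Phi(s)x_0)\pmod 1,
\]
i.e.\ $h\circ\Phi(s)=R_{\alpha_s}\circ h$ with $\alpha_s=h(\Phi(s)x_0)$, which is the desired conjugacy. The main point requiring care is the Markov--Kakutani step together with the verification that minimality promotes the invariant measure to be fully supported and atomless; these two measure-theoretic facts are precisely what upgrades the measure-theoretic construction into a topological conjugacy.
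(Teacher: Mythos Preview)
Your proof is correct. Both the paper and you begin identically: Theorem~A plus connectedness gives $X=\mathbb S^1$ with all acting maps homeomorphisms, and the absence of fixed points (the paper via Lemma~\ref{L:facts H}, you via the $\Phi$-invariance of $\Fix(\Phi(s))$) forces every $\Phi(s)$ to be orientation-preserving. From this point the arguments diverge. The paper passes to the generated abelian group $G_\Phi\subseteq\Homeo_+(\mathbb S^1)$ and invokes \cite[Corollary~5.15]{Ghy} as a black box to conclude that $G_\Phi$ (and hence $\Phi$) is conjugate to a group of rotations. You instead unpack what lies behind that citation: Markov--Kakutani yields a common invariant probability measure $\mu$, minimality forces $\mu$ to have full support and no atoms, and the cumulative-distribution map $h(x)=\mu([x_0,x])$ then realizes the conjugacy to rotations explicitly. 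Your route is longer but entirely self-contained and works directly at the semigroup level without ever forming $G_\Phi$; the paper's route is a one-line appeal to the literature. Both buy the same conclusion, and in fact your argument is essentially the standard proof of the cited result from \cite{Ghy} specialized to the abelian case.
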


Having Theorems~A and ~B at our disposal, we may ask whether a given abelian semigroup $S$ acts in a minimal way on a given disjoint union of circles $X$. Via Proposition~\ref{P:ex.min.gp}, this can be reduced to an algebraic characterization of the abelian subgroups of the group $\Homeo(X)$ of all homeomorphisms of $X$, whose natural actions on $X$ are minimal. The latter problem is addressed in the following theorem.

\newcommand{\thmC}{
	Let $\ell$ be a positive integer and $X$ be a disjoint union of $\ell$ circles. Given an abelian group $G$, the following conditions are equivalent:
	\begin{enumerate}
		\item $G$ is isomorphic to a subgroup of $\Homeo(X)$, whose natural action on $X$ is minimal;
		\item there is a short exact sequence of abelian groups
		\begin{equation*}
		0\longrightarrow H\longrightarrow G\longrightarrow K\longrightarrow0
		\end{equation*}
		such that $H$ is (isomorphic to) a dense subgroup of $\mathbb S^1$ and $K$ has cardinality $\ell$.
	\end{enumerate}
}

\begin{theoremC}\label{T:exist.min.act}
	\thmC
\end{theoremC}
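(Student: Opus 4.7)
The plan is to use Theorem~B to reduce the statement to algebraic data on the components of $X$: the subgroup of $G$ preserving each component should act on it by rotations, while the quotient should permute the components regularly.

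For the implication (1)$\Rightarrow$(2), I would start with the natural homomorphism $\pi\colon \Homeo(X)\to S_\ell$ recording the induced permutation of the $\ell$ connected components. Given $G\le \Homeo(X)$ with minimal natural action, set $H:=G\cap \ker\pi$ and $K:=\pi(G)$, so that $0\to H\to G\to K\to 0$ is exact. Minimality forces $K$ to be transitive on the set of components (every component is open), and a transitive abelian subgroup of $S_\ell$ is regular, so $|K|=\ell$. Fixing $x$ in a component $X_1$ and coset representatives $g_1,\dots,g_\ell\in G$ with $g_i(X_1)=X_i$, we have $Gx=\bigsqcup_i g_i(Hx)$, so minimality of $G$ on $X$ is equivalent to minimality of $H$ on $X_1$; the restricted action of $H$ on $X_1$ is moreover faithful, since any $h\in H$ fixing $X_1$ pointwise commutes with each $g_i$ and hence fixes every other component pointwise as well. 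Theorem~B, applied to the minimal faithful abelian action of $H$ on the circle $X_1$, conjugates it to an action by rotations; the resulting embedding $H\hookrightarrow \mathbb{S}^1$ has dense image by minimality, giving (2).

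For the implication (2)$\Rightarrow$(1), I would use the induced-representation construction. Realize $H$ as a dense subgroup of $\mathbb{S}^1$ acting by rotations, and form the space
\[
Y:=(G\times \mathbb{S}^1)/H,
\]
where $H$ acts by $h\cdot(g,x):=(gh^{-1},hx)$. Since $G$ is abelian, the left-multiplication action of $G$ on the first factor commutes with this $H$-action and descends to a $G$-action on $Y$. Choosing coset representatives $g_1,\dots,g_\ell$ of $H$ in $G$ identifies $Y$ with $\bigsqcup_{i=1}^{\ell}\mathbb{S}^1$ via $y\mapsto [(g_i,y)]$, and in these coordinates each element of $G$ acts as a rotation from one component to another. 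Minimality is immediate, since the orbit of $[(e,x_0)]$ equals $\{[(g_i,hx_0)]:1\le i\le\ell,\ h\in H\}$, which is dense because $Hx_0$ is dense in $\mathbb{S}^1$. Effectiveness follows from injectivity of $H\hookrightarrow \mathbb{S}^1$ combined with the freeness of the $K$-action on the set of components. Transporting the action to $X$ via any homeomorphism $Y\cong X$ then exhibits $G$ as a minimal subgroup of $\Homeo(X)$.

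The main obstacle is the forward direction, specifically the step identifying the restricted action of $H$ on a single component with an action by rotations: this is where Theorem~B does the real work. Abelianness of $G$ is used essentially both to reduce minimality of $G$ on $X$ to minimality of $H$ on $X_1$ and to deduce faithfulness of the restricted action; in the converse direction, the same hypothesis is exactly what makes the two factors of the induced construction commute, so that a genuine $G$-action on $Y$ is obtained.
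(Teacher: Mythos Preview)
Your argument is correct and follows the same overall strategy as the paper: in the forward direction you take $H$ to be the stabiliser of a component and invoke Theorem~B (the paper packages this step as Corollary~\ref{C:main-circle}), and in the backward direction you build an action on $K\times\mathbb{S}^1$ and check minimality and effectiveness. The only substantive difference is in the presentation of $(2)\Rightarrow(1)$: the paper writes $G\cong K\times_f H$ for an explicit $2$-cocycle $f$ and defines the action by the formula $\varphi_{(k,h)}(a,b)=(k+a,\,b+h+f(k,a))$, checking by hand that the cocycle identities make this a group action; you instead form the induced space $Y=(G\times\mathbb{S}^1)/H$ and let $G$ act by left translation on the first factor, so that well-definedness is automatic. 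These are the same construction---choosing coset representatives in your $Y$ recovers exactly the paper's formula---but your version has the advantage that verifying $\varphi_{g'}\varphi_g=\varphi_{g'+g}$ requires no computation, while the paper's version is more explicit and makes the rotational nature of each acting map immediately visible. A minor point: your faithfulness argument for $H$ on $X_1$ (an element trivial on one component is trivial everywhere by commutativity) is a direct substitute for the paper's appeal to freeness of minimal abelian actions (Lemma~\ref{L:facts H}); both are short and either would do.
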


Finally, we study minimal sets of abelian semigroup actions which intersect a free interval and prove the following trichotomy in analogy to that from \cite[Theorem~B]{DSS}. Let us mention that for a subset $B$ of a free interval $J$, we denote by $\conv(B)$ the convex hull of $B$ in $J$ (see also Section~\ref{S:cpct-free}).

\newcommand{\thmD}{
	Let $X$ be a compact Hausdorff space with a free interval $J$ and let $\Phi$ be an action of an abelian semigroup $S$ on $X$. Assume that $M\subseteq X$ is a minimal set for $\Phi$, which intersects $J$. Then $M$ is contained in a closed metrizable locally connected subspace of $X$ and exactly one of the following conditions holds:
	\begin{enumerate}
		\item $M$ is finite;
		\item $M$ is a disjoint union of finitely many circles;
		\item $M$ is nowhere dense in $X$, $A=\overline{\conv(M\cap J)}$ is an arc or a circle and $M\cap A$ is a Cantor set.
	\end{enumerate}
}

\begin{theoremD}\label{T:min.set.free.int}
	\thmD
\end{theoremD}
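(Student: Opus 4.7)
The plan is to restrict $\Phi$ to the minimal closed invariant set $M$, obtaining a minimal action $\Phi|_M$ of $S$ on the compact Hausdorff space $M$, and to distinguish cases based on the set $B := M \cap J$, which is relatively open in $M$ and relatively closed in $J \cong (0,1)$. Either $B$ has non-empty interior in $J$, or $B$ is nowhere dense in $J$. In the first case, $B$ contains a non-degenerate open subinterval of $J$ which is a free interval in $M$, and Theorem~A applied to $(M,\Phi|_M)$ forces $M$ to be a disjoint union of finitely many circles; this yields case~(2), with $M$ itself serving as the required closed metrizable locally connected subspace.

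In the second case $B$ is closed and nowhere dense in $J$, hence totally disconnected (every non-degenerate closed subinterval of $J$ has non-empty interior, so the components of $B$ must be singletons). If $M$ is finite we are in case~(1), so assume $M$ is infinite and aim at case~(3). First I would show $M$ has empty interior in $X$: if an open $U \subseteq X$ were contained in $M$, then for $x_0 \in B$ minimality provides $\phi \in S$ with $\phi(x_0) \in U$, and by continuity $\phi$ maps an open subinterval $W$ of $J$ around $x_0$ into $U \subseteq M$; a Baire category argument in the locally compact metrizable space $J$, together with the fact that every orbit of $\Phi|_M$ returns to the open set $B$, should extract an open subinterval of $J$ inside $B$, contradicting nowhere density.

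The main obstacle is showing that $A := \overline{\conv(B)}$ is an arc or a circle. If $\conv(B)$ is relatively compact in $J$, then $A$ is already a closed bounded subinterval of $J$, hence an arc. Otherwise $\conv(B)$ reaches one or both ends of $J$, and $A$ inherits the accumulation set of the corresponding end in $X \setminus J$. The key claim is that each such accumulation set is a singleton: supposing two distinct accumulation points $p \neq q$ at a common end, I would take disjoint open neighborhoods $U_p \ni p$ and $U_q \ni q$, use minimality to find $\phi, \psi \in S$ with $\phi(p), \psi(q) \in J$, and then exploit the commutativity of $S$ to transport points of $B$ clustering near $p$ and near $q$ through $\phi$ and $\psi$ in a way that produces a non-degenerate open subinterval of $J$ contained in $B$, contradicting the standing hypothesis. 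Once $A$ is known to be an arc or circle, $M \cap A$ is closed in $A$, totally disconnected (its trace on $J$ is $B$, and $A \setminus J$ has at most two points), and without isolated points (an isolated point of $M$ would be open in $M$ and, by minimality of $\Phi|_M$, would force $M$ to be finite); so $M \cap A$ is a Cantor set.

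Finally, for the closed metrizable locally connected subspace of $X$ containing $M$, I would take $Z := \overline{\bigcup_{\phi \in S \cup \{\Id\}} \phi(A)}$. Since for any $x \in B$ the orbit $\{\phi(x) : \phi \in S \cup \{\Id\}\} \subseteq \bigcup_\phi \phi(A)$ is dense in $M$, we have $M \subseteq Z$. Using the same commutativity argument as in the previous step, each $\phi(A)$ is itself an arc or a circle, and compactness of $M$ together with minimality reduces $Z$ to a finite union of such; the resulting space is closed, metrizable, and locally connected, as required.
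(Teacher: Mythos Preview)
Your overall case split is reasonable, but the proposal has a real gap in case~(3), and the gap is precisely where the paper's organizing idea lives.

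The paper's first move is to build the closed metrizable locally connected subspace $Y$ \emph{before} doing anything else: pick a compact arc $B\subseteq J$ whose interior meets $M$, apply Lemma~\ref{L:dens.back.orb} to the restricted action on $M$ to get a finite $F\subseteq S$ with $M\subseteq\bigcup_{r\in F}\varphi_r(B)$, and set $Y=\bigcup_{r\in F}\varphi_r(B)$. Being a finite union of Peano continua, $Y$ is closed, metrizable and locally connected. This $Y$ is then the engine for the rest of the proof. For the ``$M$ has nonempty interior'' alternative, local connectedness of $Y$ produces a \emph{connected} open $V\subseteq M$, and connectedness is what forces the dichotomy $\varphi_t(V)\cap J$ contains an arc \emph{or} $\varphi_t(V)$ is a singleton (then Lemma~\ref{L:min.set.free.int} gives finiteness). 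For the ``arc or circle'' step, local connectedness of $Y$ is again the tool: if $p,q$ are two accumulation points of $M\cap L$ at the same end, every connected neighbourhood of $p$ in $Y$ swallows a tail $(b-\varepsilon,b)$, and likewise for $q$, so $p$ and $q$ cannot be separated in the Hausdorff space $Y$, hence $p=q$.

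By contrast, you postpone the construction of the ambient space to the end (and base it on $A$, which is only defined in case~(3)), leaving yourself without the local-connectedness tool when you need it. Your ``Baire category argument'' for nowhere density is not a proof: from $\varphi(W)\subseteq M$ with $W$ an open subinterval of $J$ you do not get $W\subseteq M$, since the acting maps need not be injective. And your ``commutativity transport'' argument for the singleton accumulation set is a wish, not a mechanism: there is no reason why mapping points of $B$ that cluster at $p$ and $q$ into $J$ should manufacture a whole subinterval of $B$. Both steps are rescued immediately once you have $Y$ in hand; build it first, as the paper does, and the rest follows.

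Two smaller points: $\varphi_r(A)$ is a continuous image of an arc or circle, hence a Peano continuum, but not in general an arc or circle; and your claim that ``compactness of $M$ together with minimality reduces $Z$ to a finite union'' is exactly the content of Lemma~\ref{L:dens.back.orb}, so invoke it explicitly.
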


If all the acting maps of $\Phi$ are homeomorphisms, this trichotomy can be slightly strengthened, see Corollary~\ref{C:min.set.free.int}.

These results do not generalize to actions of arbitrary semigroups. For instance, by Duminy's theorem, the free semigroup on two generators has a minimal action on the interval $[0,1]$, see e.g.~\cite[Section~3.3]{Nav} and \cite{Sh}. It would be interesting to know to what extent our results generalize to actions of non-abelian semigroups.

\section{Minimal abelian semigroup actions on topological spaces}\label{S:Sactions}

\subsection{Definitions}\label{SS:Defs}
The set of positive integers is denoted by $\mathbb N$. By a space we mean a topological space. For the composition of two selfmaps $f,g$ of a space $X$, we write both $f\circ g$ and $fg$.

Let $S$ be a (nonempty) abelian semigroup (equipped with the discrete topology) and $X$ be a space.
By an \emph{action} of $S$ on $X$ we mean a continuous map
$\Phi:S\times X\to X$ such that $\Phi(s,\Phi(t,x)) = \Phi(s+t,x)$ for all $s,t\in S$
and $x\in X$. The acting maps $\Phi(s,\cdot)$ of $\Phi$ will be denoted by $\varphi_s:X\to X$.
If the semigroup $S$ is a group with zero element $0$ and we explicitly speak about a group action, then it is additionally assumed that $\Phi(0,x)=x$ for every $x\in X$;
in such a case, all the acting maps $\varphi_s$ of the group action $\Phi$ are homeomorphisms of $X$.

An action $\Phi$ of $S$ on $X$ is \emph{minimal} if for every $x\in X$, the \emph{$\Phi$-orbit}
$\Orb_\Phi(x):=\{\varphi_s(x): s\in S\}$ of $x$ is dense in $X$.
An equivalent definition
is that $X$ is the unique minimal set of $\Phi$. Recall that a subset $M$ of $X$ is a \emph{minimal set} of an action $\Phi$ if it is nonempty, closed, $\Phi$-invariant
(that is, $\varphi_s(M)\subseteq M$ for every $s\in S$), and there is no proper subset with these three properties.

We say that an action $\Phi$ of $S$ on $X$ is \emph{free} or that $S$ \emph{acts freely} on $X$ if,
for every $s\in S$, either the acting map $\varphi_s$ is the identity or it has no fixed point.
If $S$ is a group then the action of $S$ on $X$ is called \emph{effective}  if $\varphi_s=\Id_X$ implies $s=0$.

For a space $X$, denote by $\C(X)$ the semigroup of all continuous maps $f\colon X\to X$,
and by $\Homeo(X)$ the group of all homeomorphisms of $X$.
In the case when $S$ is a subsemigroup of $\C(X)$,
we shall often consider the \emph{natural action} $\Phi$ of $S$ on $X$
with $\varphi_s=s$ for every $s\in S$.

Two actions $\Phi,\Psi$ of $S$ on $X,Y$ with acting maps $\varphi_s,\psi_s$, respectively,
are said to be \emph{(topologically) conjugate}
if there is a homeomorphism $h:X\to Y$ such that
$h\circ \varphi_s = \psi_s\circ h$ for every $s\in S$.
Clearly, if $\Phi,\Psi$ are conjugate and one of them is minimal then so is the other one.

\subsection{Basic facts on minimal semigroup actions}\label{SS:Sfacts}

If $(X,f)$ is a dynamical system given by a space $X$ and a continuous map $f\colon X\to X$, and a (not necessarily closed) subset $A$ of $X$ is \emph{$f$-invariant}, i.e.~$f(A)\subseteq A$,
then we also use a less precise notation $(A,f)$ for the subsystem $(A, f|_A)$ of $(X,f)$.
By $\Fix (f)$, $\Per (f)$, $\Min (f)$ and $\Rec (f)$  we denote the set of fixed, periodic, minimal and recurrent points of $f$, respectively (recall that a \emph{minimal point} is an element of a minimal set).

\begin{lemma}\label{L:facts X}
	Let $\Phi$ be a minimal action of an abelian semigroup $S$ on a topological space $X$
	and $f,g$ be two of the acting maps of $\Phi$. Then the following statements hold:
	\begin{enumerate}
		\item\label{IT:3}  if $\emptyset \neq A\subseteq X$ is $\Phi$-invariant then it is dense in $X$;
		
		\item\label{IT:4} $(g(X), f)$ is both a subsystem of $(X,f)$ and a factor of $(X,f)$;
		
		\item\label{IT:5} if $A\subseteq X$ is $f$-invariant then
		 $(g(A),f)$ is both a subsystem of $(X,f)$ and a factor of $(A,f)$, in particular, if $A$ is a minimal set of $f$ then so is $g(A)$;
		
		\item\label{IT:9} the set $g(X)$ is $\Phi$-invariant and dense in $X$;
		
		\item\label{IT:7}
			the sets $\Fix (f)$, $\Per (f)$, $\Min (f)$
			and $\Rec (f)$ are $\Phi$-invariant,
			and if any of them is nonempty then it is dense in $X$.
			
	\end{enumerate}
\end{lemma}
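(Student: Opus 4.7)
The plan is to leverage two standing facts: since $S$ is abelian any two acting maps commute, in particular $fg = gf$; and minimality of $\Phi$ says exactly that every $\Phi$-orbit is dense in $X$, equivalently that $X$ has no proper nonempty closed $\Phi$-invariant subset. Part (1) is then immediate: any nonempty $\Phi$-invariant $A$ contains $\Orb_\Phi(x)$ for each of its points, and this orbit is dense in $X$ by minimality, so $A$ is dense. Parts (2) and (4) drop out of commutativity combined with (1). For (2), $f(g(X)) = g(f(X)) \subseteq g(X)$ shows $(g(X),f)$ is an $f$-subsystem of $(X,f)$, while $g \colon X \to g(X)$ is a continuous equivariant surjection, hence a factor map. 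For (4), the same identity $\varphi_s(g(X)) = g(\varphi_s(X)) \subseteq g(X)$ for every $s \in S$ shows $g(X)$ is nonempty and $\Phi$-invariant, and (1) then delivers density.

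For (3), the subsystem and factor-map statements are identical to those of (2), with $X$ replaced by the $f$-invariant set $A$; only commutativity and $f(A) \subseteq A$ are used. For the minimal-set assertion I would use a preimage trick: given any nonempty closed $f$-invariant $C \subseteq g(A)$, the set $g^{-1}(C) \cap A$ is nonempty (any point of $C$ has a preimage in $A$), closed in $A$, and $f$-invariant (if $x$ lies in it, then $g(f(x)) = f(g(x)) \in C$ and $f(x) \in A$), so $f$-minimality of $A$ forces $g^{-1}(C) \cap A = A$, whence $g(A) \subseteq C$ and equality holds. Part (5) is then a sequence of commutativity checks feeding back into (1): $fg = gf$ shows that $g$ sends $\Fix(f)$ and $\Per(f)$ into themselves; continuity of $g$ applied to any recurrence sequence $f^{n_k}(x) \to x$ yields $f^{n_k}(g(x)) = g(f^{n_k}(x)) \to g(x)$, so $g(\Rec(f)) \subseteq \Rec(f)$; and if $x$ lies in an $f$-minimal set $M$, then $g(x) \in g(M)$, which is $f$-minimal by (3), so $g(x) \in \Min(f)$. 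All four sets are thus $\Phi$-invariant, and (1) provides density whenever they are nonempty.

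The only subtle point is that calling $g(A)$ a minimal set in (3) implicitly requires $g(A)$ to be closed in $X$. This is automatic whenever $A$ is compact, which includes the setting of the main theorems (compact Hausdorff $X$), since then the continuous image of a compact set in a Hausdorff space is closed. Everything else is routine bookkeeping built from commutativity and the definitions.
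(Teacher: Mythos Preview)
Your proof is correct and follows essentially the same route as the paper: part~(1) from the definition of minimality, parts~(2)--(4) from the commutation identity $fg=gf$, and part~(5) by feeding each of the four sets through (3) (or, as you do, through direct commutativity/continuity checks) and then invoking (1). The paper's proof is much terser---it simply cites ``$f\circ g=g\circ f$'' for (2)--(3) and ``by virtue of (3)'' for all four cases in (5)---and it does not address the closedness subtlety you flag for $g(A)$; your remark that compact Hausdorff $X$ resolves this in the applications is apt.
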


\begin{proof}	
	Statement (\ref{IT:3}) follows from definition of minimality and (\ref{IT:4}), (\ref{IT:5}) follow from the identity $f\circ g=g\circ f$. Further, (\ref{IT:9}) follows from (\ref{IT:4}) and (\ref{IT:3}). 	 	
	We check (\ref{IT:7}). By virtue of (\ref{IT:5}), if a point $x$ belongs to one of these sets, then so does its image under every acting map of $\Phi$. Consequently, all these sets are $\Phi$-invariant and so the density part of statement (\ref{IT:7}) follows from (\ref{IT:3}).
\end{proof}

\begin{lemma}\label{L:facts H}
	Let $\Phi$ be a minimal action of an abelian semigroup $S$
	on a Hausdorff space~$X$. Then $\Phi$ is free.
\end{lemma}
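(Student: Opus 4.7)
The plan is to argue by contradiction: suppose some acting map $f=\varphi_s$ has a fixed point yet is not the identity of $X$. I want to derive a contradiction by showing $\Fix(f)=X$.

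First I would invoke Lemma~\ref{L:facts X}(\ref{IT:7}): the set $\Fix(f)$ is $\Phi$-invariant, and being nonempty by assumption, it must be dense in $X$. This uses nothing beyond minimality and the commutativity of the acting maps (which give $\Phi$-invariance of $\Fix(f)$ via $f(g(x))=g(f(x))=g(x)$ whenever $f(x)=x$).

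The Hausdorff hypothesis now enters in a single step: the fixed-point set of a continuous selfmap of a Hausdorff space is closed, since it is the equalizer of $f$ and $\Id_X$, i.e.\ the preimage of the diagonal $\Delta_X\subseteq X\times X$ under the continuous map $x\mapsto(f(x),x)$, and $\Delta_X$ is closed in $X\times X$ precisely because $X$ is Hausdorff. Combining this with the density just established gives $\Fix(f)=\overline{\Fix(f)}=X$, so $f=\Id_X$, contradicting the choice of $f$.

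There is no real obstacle here; the lemma is essentially a bookkeeping consequence of Lemma~\ref{L:facts X}(\ref{IT:7}) together with the closedness of fixed-point sets in Hausdorff spaces. The only subtlety worth flagging is that the commutativity of $S$ is used implicitly through Lemma~\ref{L:facts X}, in guaranteeing that $\Fix(f)$ is $\Phi$-invariant; without abelianness one would only obtain invariance under the centralizer of $f$, and the density step would fail.
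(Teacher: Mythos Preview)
Your proof is correct and follows essentially the same line as the paper's: invoke Lemma~\ref{L:facts X}(\ref{IT:7}) to obtain density of $\Fix(\varphi_s)$, then use the Hausdorff property (closedness of fixed-point sets) to conclude $\Fix(\varphi_s)=X$. The only cosmetic difference is that the paper phrases this as a direct argument rather than by contradiction, and leaves the diagonal/equalizer justification implicit.
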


\begin{proof}
  Let $s\in S$ and assume that $\varphi_s$ has a fixed point. Then, by Lemma~\ref{L:facts X}(\ref{IT:7}), $\Fix(\varphi_s)$ is dense in $X$.
  Since $X$ is Hausdorff, the continuity of $\varphi_s$ gives $\Fix(\varphi_s)=X$ and hence $\varphi_s=\Id_X$ .
\end{proof}

\begin{corollary}\label{C:fixed-point-property}
	If a non-degenerate Hausdorff space $X$ has the fixed point property,
	then it does not admit a minimal action of any abelian semigroup $S$.
\end{corollary}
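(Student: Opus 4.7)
The plan is to argue by contradiction and use Lemma~\ref{L:facts H} as a black box. Suppose, contrary to the claim, that some abelian semigroup $S$ admits a minimal action $\Phi$ on $X$, with acting maps $\varphi_s$. For an arbitrary $s\in S$, the map $\varphi_s\colon X\to X$ is continuous, so the fixed point property of $X$ yields a fixed point of $\varphi_s$. On the other hand, Lemma~\ref{L:facts H} tells us that $\Phi$ is free, meaning that each $\varphi_s$ is either the identity or has no fixed point at all. The two conclusions together force $\varphi_s=\Id_X$.

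Since $s\in S$ was arbitrary, every acting map of $\Phi$ is the identity, and hence for each $x\in X$ the $\Phi$-orbit $\Orb_\Phi(x)$ reduces to $\{x\}$. By minimality this singleton must be dense in $X$. But $X$ is Hausdorff, so $\{x\}$ is closed, and since $X$ is non-degenerate the set $\{x\}$ is a proper subset of $X$; therefore it cannot be dense, a contradiction. There is no genuine obstacle here: the real work sits in Lemma~\ref{L:facts H}, and the fixed point property is exactly what is needed to upgrade freeness to the contradiction with non-degeneracy.
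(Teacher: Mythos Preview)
Your argument is correct and is precisely the intended one: the paper states the corollary immediately after Lemma~\ref{L:facts H} without further proof, and your write-up simply spells out why freeness of the action together with the fixed point property forces every acting map to be the identity, contradicting minimality on a non-degenerate Hausdorff space.
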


We shall have an occasion to use the following well known lemma. We include a proof for completeness.

\begin{lemma}\label{L:dens.back.orb}
Let $\Phi$ be a minimal action of an abelian semigroup $S$ on a compact Hausdorff space $X$. Then for every nonempty open set $V\subseteq X$ there exist finite sets $E,F\subseteq S$ with $X=\bigcup_{t\in E}\varphi_t^{-1}(V)$ and $X=\bigcup_{r\in F}\varphi_r(V)$.
\end{lemma}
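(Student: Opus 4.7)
The lemma splits into two finite-covering assertions, of which the preimage version is a direct compactness consequence of density of orbits, while the image version requires additional input from surjectivity and closedness of the acting maps.

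For the preimage covering, fix any $x\in X$. Minimality of $\Phi$ makes $\Orb_\Phi(x)$ dense in $X$, so it meets the nonempty open set $V$; some $s\in S$ satisfies $\varphi_s(x)\in V$, i.e.\ $x\in\varphi_s^{-1}(V)$. Each $\varphi_s^{-1}(V)$ is open by continuity of $\varphi_s$, so $\{\varphi_s^{-1}(V):s\in S\}$ is an open cover of the compact space $X$, and extracting a finite subcover supplies $E$.

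For the image covering, I first verify that each $\varphi_s$ is a closed surjection: by Lemma~\ref{L:facts X}(\ref{IT:9}) the image $\varphi_s(X)$ is $\Phi$-invariant and dense in $X$, and as the continuous image of the compact space $X$ it is also closed in the Hausdorff space $X$, so $\varphi_s(X)=X$; closedness of $\varphi_s$ is then automatic since any continuous map from a compact space to a Hausdorff space is closed. Using regularity of the compact Hausdorff $X$, I choose an open $V_0$ with $\overline{V_0}\subseteq V$ and set
\[
W_r\;:=\;X\setminus\varphi_r(X\setminus V_0),
\]
which is open (since $\varphi_r(X\setminus V_0)$ is closed) and satisfies $W_r\subseteq\varphi_r(V_0)\subseteq\varphi_r(V)$ by surjectivity of $\varphi_r$ (every point in $W_r$ has all of its nonempty $\varphi_r$-fibers inside $V_0$). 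If one can prove $\bigcup_{r\in S}W_r=X$, then since the $W_r$ are open, compactness of $X$ extracts a finite $F\subseteq S$ with $X=\bigcup_{r\in F}W_r\subseteq\bigcup_{r\in F}\varphi_r(V)$, which is the desired image covering.

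The principal obstacle is establishing $\bigcup_rW_r=X$, equivalently $\bigcap_r\varphi_r(X\setminus V_0)=\emptyset$. Although $\bigcup_r\varphi_r(V)$ is immediately seen to be a dense $\Phi$-invariant subset of $X$ via Lemma~\ref{L:facts X}(\ref{IT:3}), upgrading this density to the fiberwise statement that some $\varphi_r$ sends an entire preimage $\varphi_r^{-1}(y)$ into $V_0$ requires genuine use of abelianness, closedness, and surjectivity of the acting maps in concert. I would argue by contradiction, showing that a nonempty intersection $\bigcap_r\varphi_r(X\setminus V_0)$ generates, after passing if necessary to a suitable stable intersection to compensate for the asymmetry between $S$ and its formal inverses, a closed $\Phi$-invariant proper subset of $X$, contradicting minimality. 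This fiber-refinement step is the only part of the argument that goes beyond standard open-cover compactness.
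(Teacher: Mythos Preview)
Your treatment of the preimage cover $E$ is correct and matches the paper exactly. The problem lies entirely in the image cover $F$: you set up the open sets $W_r=X\setminus\varphi_r(X\setminus V_0)$ correctly, but then you concede that showing $\bigcap_{r\in S}\varphi_r(X\setminus V_0)=\emptyset$ is ``the principal obstacle'' and offer only a sketch. That sketch does not go through as stated. The set $\bigcap_{r}\varphi_r(X\setminus V_0)$ is \emph{not} $\Phi$-invariant in general for a semigroup action: from $y=\varphi_r(x)$ with $x\notin V_0$ for every $r$, you cannot conclude that $\varphi_s(y)$ has the same property, since $\varphi_s(y)=\varphi_{s+r}(x)$ only witnesses membership in $\varphi_{s+r}(X\setminus V_0)$, not in $\varphi_r(X\setminus V_0)$. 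Your appeal to ``a suitable stable intersection'' is precisely the missing idea, and you have not supplied it.

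The paper avoids this fiberwise difficulty entirely by a short algebraic trick that recycles the set $E$ you already found. Put $p=\sum_{t\in E}t$ and, for each $t\in E$, set $r_t=\sum_{s\in E\setminus\{t\}}s$, so that $p=r_t+t$. Since each $\varphi_t$ is surjective (which you proved), $\varphi_t(\varphi_t^{-1}(V))=V$, and hence
\[
X=\varphi_p(X)=\varphi_p\Bigl(\bigcup_{t\in E}\varphi_t^{-1}(V)\Bigr)
=\bigcup_{t\in E}\varphi_{r_t}\bigl(\varphi_t(\varphi_t^{-1}(V))\bigr)
=\bigcup_{t\in E}\varphi_{r_t}(V).
\]
Thus $F=\{r_t:t\in E\}$ works. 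The point is that commutativity lets you split $\varphi_p$ as $\varphi_{r_t}\varphi_t$ differently on each piece of the preimage cover; no invariance argument for an auxiliary closed set is needed at all.
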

\begin{proof}
By minimality of $\Phi$, $X=\bigcup_{t\in S}\varphi_t^{-1}(V)$ and so the existence of the required set $E$ follows by compactness of $X$. Further, by virtue of Lemma~\ref{L:facts X}(\ref{IT:9}) and our assumptions on $X$, all the acting maps of $\Phi$ are surjective. Now we may assume that $V$ is a proper subset of $X$, otherwise the rest of the proof is trivial. Thus, the set $E$ has at least two elements.
Set $p=\sum_{s\in E}s$, $r_t=\sum_{s\in E\setminus t}s$ for $t\in E$ and $F=\{r_t : t\in E\}$. Since $p=r_t+t$ for every $t\in E$, we obtain
\begin{equation*}
X=\varphi_p(X)=\bigcup_{t\in E}\varphi_p\left(\varphi_t^{-1}(V)\right)=\bigcup_{t\in E}\varphi_{r_t}(V)=\bigcup_{r\in F}\varphi_r(V),
\end{equation*}
for $\varphi_t(\varphi_t^{-1}(V))=V$ by surjectivity of $\varphi_t$.
\end{proof}

\begin{lemma}\label{L:min.set.free.int}
	Let $\Phi$ be a minimal action of an abelian semigroup $S$ on a compact Hausdorff space $X$. Then the following statements hold:
	\begin{enumerate}
		\item if $\varphi_s(V)$ is a singleton for a nonempty open set $V\subseteq X$ and some $s\in S$ then the space $X$ is finite;
		\item if the space $X$ has an isolated point then it is finite.
	\end{enumerate}
\end{lemma}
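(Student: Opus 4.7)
Statement~(2) will follow immediately from statement~(1): if $x\in X$ is isolated, then $V=\{x\}$ is a nonempty open subset of $X$ and, picking any $s\in S$ (the semigroup is nonempty by convention), the image $\varphi_s(V)=\{\varphi_s(x)\}$ is a singleton, so part~(1) applies.

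To prove (1), I intend to turn a single point-collapse by $\varphi_s$ into a global point-collapse by appealing to Lemma~\ref{L:dens.back.orb}. Assume $\varphi_s(V)=\{y\}$. By the second conclusion of that lemma there is a finite set $F\subseteq S$ with $X=\bigcup_{r\in F}\varphi_r(V)$. Applying $\varphi_s$ to this cover and using commutativity of $S$,
\[
\varphi_s(X)=\bigcup_{r\in F}\varphi_s(\varphi_r(V))=\bigcup_{r\in F}\varphi_r(\varphi_s(V))=\bigcup_{r\in F}\{\varphi_r(y)\},
\]
so $\varphi_s(X)$ is finite. On the other hand, Lemma~\ref{L:facts X}(\ref{IT:9}) tells us that $\varphi_s(X)$ is dense in $X$; since finite subsets of a Hausdorff space are closed, we conclude $X=\varphi_s(X)$ is finite, as required.

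I do not foresee any real obstacle: Lemma~\ref{L:dens.back.orb} supplies precisely the finite cover of $X$ by forward translates of $V$ that the argument needs, and the only subtle point is to apply $\varphi_s$ \emph{after} forming this cover and then to invoke the abelian hypothesis to commute $\varphi_s$ past each $\varphi_r$ before collapsing $\varphi_s(V)$ to the point $y$.
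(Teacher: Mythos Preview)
Your proof is correct and takes a genuinely different, shorter route than the paper.  The paper first argues that $\varphi_s$ must be \emph{injective}: writing $\varphi_s(V)=\{x\}$, it finds $t\in S$ with $\varphi_t(x)\in\varphi_s^{-1}(V)$, deduces that $\varphi_{2s+t}$ fixes $x$, and then invokes freeness (Lemma~\ref{L:facts H}) to conclude $\varphi_{2s+t}=\Id_X$, hence $\varphi_s$ is injective and $V$ itself is a singleton; only then does it cover $X$ by finitely many translates of $V$ via Lemma~\ref{L:dens.back.orb}.  You instead go straight to the finite forward cover $X=\bigcup_{r\in F}\varphi_r(V)$, push $\varphi_s$ through it using commutativity, and observe that $\varphi_s(X)$ is finite and dense (hence equals $X$).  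Your argument is more elementary in that it never appeals to Lemma~\ref{L:facts H}; on the other hand, the paper's detour yields the extra information that $\varphi_s$ is a homeomorphism, which you do not recover (though it is not needed for the lemma as stated).
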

\begin{proof}
	Fix $V$ and $s$ as in (1) and write $\varphi_s(V)=\{x\}$. By Lemma~\ref{L:facts X}(\ref{IT:9}), the open set $\varphi_s^{-1}(V)$ is nonempty. Since $\Phi$ is minimal, we have $\varphi_t(x)\in\varphi_s^{-1}(V)$ for some $t\in S$. Then $x=\varphi_s\varphi_s\varphi_t(x)=\varphi_{2s+t}(x)$ and so $\varphi_{2s+t}=\varphi_{s+t}\varphi_s$ is the identity on $X$ by virtue of
	 Lemma~\ref{L:facts H}. This means that $\varphi_s$ is an injection, whence it follows that $V$ is a singleton. Now, by Lemma~\ref{L:dens.back.orb}, there is a finite set $F\subseteq S$ with $X=\bigcup_{r\in F}\varphi_r(V)$. Thus, the space $X$ is finite, which verifies statement (1). Statement (2) follows immediately from statement (1).
\end{proof}

Let $\Phi$ be a minimal action of an abelian semigroup $S$ on a space $X$. Assume that the acting maps $\varphi_s$ are homeomorphisms of $X$, thus forming
an (abelian) \emph{subsemigroup} $S_\Phi=\{\varphi_s: s\in S\}$ of the group $\Homeo(X)$ of all homeomorphisms of $X$. Define
\begin{equation}\label{Eq:Gphi}
G_\Phi = \left\{
\varphi_{s} \circ   \varphi_{t}^{-1}:
s,t\in S
\right\}.
\end{equation}
Then
\begin{itemize}
	\item $G_\Phi$ is an \emph{abelian subgroup} of $\Homeo(X)$,
	\item $G_\Phi\supseteq S_\Phi$; indeed, given $s\in S$, we have
	$\varphi_s=\varphi_{s+s}\circ \varphi_s^{-1}\in G_\Phi$.
\end{itemize}
Thus, $G_\Phi$ is the subgroup of $\Homeo(X)$
generated by $S_\Phi$.


\section{Minimal abelian semigroup actions on compact spaces with a free interval}\label{S:cpct-free}

While in the statements of our main theorems we use the notion of a free interval,
now it will be convenient to use also the notion of a free arc.
Recall that a \emph{free arc} $A$ in a space $X$ is  a subset of $X$ homeomorphic to the compact unit interval $[0,1]$, which becomes an open set in $X$ after removing its two end points.
We shall often use an identification of a free arc (free interval) with a genuine compact (open) interval in the real line, together with the natural order.
In particular, the notion of the convex hull $\conv(M)$ is naturally defined for a subset $M$ of a free interval or a free arc.

Assume that a map $f$ sends a closed arc $[a,b]$ onto a closed arc $[c,d]$. If $f(a)=c$ and $f(b)=d$ then we write $f\colon[a,b]\nearrow[c,d]$. Similarly, if $f(a)=d$ and $f(b)=c$ then we write $f\colon[a,b]\searrow[c,d]$.

\begin{lemma}\label{L:eaist.I'}
	Let $X$ be a Hausdorff space with a free arc $A=[0,1]$ and let $f\colon X\to X$ be a continuous map without fixed points in $A$. Assume that $a,b\in [0,1]$ are such that $f(a)\in(0,1)$  and $f(b)\notin[0,1]$. Then the following statements hold:
	\begin{enumerate}
		\item if $a<b$ and $f(a)>a$ then there is an arc $I\subseteq [a,b]$ with $f\colon I\nearrow[f(a),1]$;
		\item if $a<b$ and $f(a)<a$ then there is an arc $I\subseteq [a,b]$ with $f\colon I\searrow[0,f(a)]$;
		\item if $b<a$ and $f(a)>a$ then there is an arc $I\subseteq [b,a]$ with $f\colon I\searrow[f(a),1]$;
		\item if $b<a$ and $f(a)<a$ then there is an arc $I\subseteq [b,a]$ with $f\colon I\nearrow[0,f(a)]$.
	\end{enumerate}
\end{lemma}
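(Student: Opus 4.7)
My plan is to prove case (1) in detail and then obtain the remaining three cases from it by symmetries: reflecting the free arc via $t\mapsto 1-t$ and/or reversing the roles of $a$ and $b$. Throughout I identify $A$ with $[0,1]\subseteq\mathbb R$ via the given homeomorphism, so that both $t$ and $f(t)$ can be compared as real numbers whenever both lie in $A$. The strategy has three steps: first, cut out the maximal subarc of $[a,b]$ starting at $a$ on which $f$ stays inside $A$; second, use the fixed-point-free hypothesis to pin down the value of $f$ at the far endpoint of this subarc; and third, inside this subarc, extract a clean piece on which $f$ travels from $f(a)$ monotonically, in the sense of endpoint values, up to $1$.

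For the first step, set $d^*:=\sup\{t\in[a,b] : f([a,t])\subseteq[0,1]\}$. Since $X$ is Hausdorff and $A=[0,1]$ is compact, hence closed, in $X$, the supremum is attained and $f([a,d^*])\subseteq[0,1]$; the assumption $f(b)\notin[0,1]$ forces $d^*<b$. Because the interior $(0,1)$ of $A$ is open in $X$ by the free arc condition, if $f(d^*)\in(0,1)$ then continuity would map a whole neighborhood of $d^*$ into $(0,1)$, contradicting the maximality of $d^*$. Hence $f(d^*)\in\{0,1\}$. For the second step, the function $g(t):=f(t)-t$ is continuous and real-valued on $[a,d^*]$ (both $t$ and $f(t)$ lie in $[0,1]$), never vanishes by the no-fixed-point hypothesis, and satisfies $g(a)>0$. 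By connectedness, $g>0$ throughout $[a,d^*]$, so $f(d^*)>d^*\geq 0$, ruling out $f(d^*)=0$ and forcing $f(d^*)=1$.

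For the third step, let $c:=\sup\{t\in[a,d^*] : f(t)=f(a)\}$ and $d:=\inf\{t\in[c,d^*] : f(t)=1\}$, both attained by continuity. Since $f(a)<1$ one has $c<d$, $f(c)=f(a)$, $f(d)=1$, and by the choices of $c$ and $d$ the function $f$ avoids the values $f(a)$ and $1$ on the open arc $(c,d)$. A short IVT argument shows $f((c,d))\subseteq(f(a),1)$: if $f(s)<f(a)$ for some $s\in(c,d)$, then since $f(d)=1>f(a)$, $f$ would take the value $f(a)$ somewhere in $(s,d)$, contradicting the definition of $c$. Hence $f([c,d])$ is a connected subset of $[f(a),1]$ containing both endpoints, and therefore equals $[f(a),1]$. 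Thus $I:=[c,d]\subseteq[a,b]$ yields $f\colon I\nearrow[f(a),1]$.

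The main obstacle I expect is the bookkeeping across the four cases: in (2) the sign of $g$ flips, so the far-endpoint value becomes $0$ rather than $1$; and in (3)--(4) the capture interval must be built going leftward from $a$, so the roles of $\sup$ and $\inf$ (and of $c$ and $d$ inside $I$) are swapped. The arguments are structurally identical, but one must check that the fixed-point-free dichotomy still pins $f$ down at the \emph{far} endpoint of the capture interval and that the orientation tag $\nearrow$ versus $\searrow$ in each conclusion matches the chosen ordering of $I$.
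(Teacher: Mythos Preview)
Your proof is correct and follows essentially the same route as the paper's. The paper is just more compressed: instead of your $d^*$ it takes $z$ to be the right endpoint of the connected component of $[a,b]\cap f^{-1}((0,1))$ containing $a$, which already satisfies $f(z)\in\{0,1\}$, so your auxiliary point $d$ is unnecessary; it then sets $u=\max\{t\in[a,z]:f(t)=f(a)\}$ and $I=[u,z]$, handling cases (1) and (2) in one stroke (your sign argument for $g=f-\mathrm{id}$ is exactly what the paper hides behind the phrase ``using that $A\cap\Fix(f)=\emptyset$'') and leaving (3)--(4) to the reflection symmetry.
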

\begin{proof}
	Before turning to the proof, notice that
	the set $(0,1)$ is open in $X$ by definition of a free arc and the set $[0,1]$ is (compact hence) closed in $X$ by the Hausdorff property of $X$.
	
	Now, without loss of generality, we may assume that $a<b$. (The other case $b<a$ then follows by an obvious symmetry argument.) The set $f^{-1}((0,1))$ is open in $X$ and so $U=[a,b]\cap f^{-1}((0,1))$ is open in $[a,b]$. Let $C$ be the component of $U$ containing $a\in U$. Since $b\notin U$, the set $C$ is of the form $C=[a,z)$ for some $a<z<b$. Further, as observed above, the sets $(0,1)$ and $X\setminus[0,1]$ are open in $X$ and so $f(z)\in\{0,1\}$. Finally, using that $A\cap\Fix(f)=\emptyset$, it is sufficient to set $u=\max\{t\in[a,z] : f(t)=f(a)\}$ and $I=[u,z]$.
\end{proof}

\begin{lemma}\label{DSS-A.1}
	Let $X$ be a Hausdorff space with a free arc $A=[0,1]$ and $h:X\to X$ be a continuous map. Let $0 \leq \alpha < \beta \leq \gamma < \delta \leq 1$ be such that $h([\alpha,\beta]\cup [\gamma,\delta]) \subseteq [0,1]$, $h(\alpha) \geq \delta$, $h(\gamma) \leq \alpha$ and $h(\delta) \in h([\alpha,\beta])$.
	Then $\Fix(h) \cap [\gamma,\delta] \neq \emptyset$ or $\Per(h) \cap [\alpha,\beta] \neq \emptyset$.
\end{lemma}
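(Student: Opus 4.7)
The plan is to assume $\Fix(h) \cap [\gamma,\delta] = \emptyset$ and produce a periodic point of $h$ in $[\alpha,\beta]$. Since $h(\gamma) \leq \alpha < \gamma$ and $h$ is continuous with no fixed point on $[\gamma,\delta]$, the sign of $h - \mathrm{id}$ is constant on $[\gamma,\delta]$, giving $h(x) < x$ there; in particular $h(\delta) < \delta$. By hypothesis, pick $c \in [\alpha,\beta]$ with $h(c) = h(\delta)$. If $h(\delta) < c$, then $h(c) < c$ while $h(\alpha) \geq \delta > \alpha$, and the intermediate value theorem applied to $h - \mathrm{id}$ on $[\alpha,c]$ already produces a fixed point of $h$ in $[\alpha,\beta]$. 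Henceforth assume $h(\delta) \geq c$, and let $p \in [\alpha,c]$ be the largest point with $h(p) = \delta$ (which exists by IVT since $h(\alpha) \geq \delta > h(c)$); maximality of $p$ forces $h < \delta$ on $(p,c]$.

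The strategy is to locate a closed subinterval $[a,b] \subseteq [p,c] \subseteq [\alpha,\beta]$ and an integer $k \geq 1$ such that $h^k$ is continuous and real-valued on $[a,b]$ with $h^k(a) \geq b$ and $h^k(b) \leq a$, from which a fixed point of $h^k$ in $[a,b]$ (hence a periodic point of $h$ in $[\alpha,\beta]$) follows by IVT. The principal case is when $h$ attains the value $\gamma$ somewhere in $(p,c]$. Letting $s$ be the smallest such point, the extremal choices of $p$ and $s$ together with continuity yield $\gamma \leq h \leq \delta$ on $[p,s]$ with $h(p) = \delta$ and $h(s) = \gamma$, so $h([p,s]) = [\gamma,\delta]$ and $h^2$ is automatically continuous and real-valued on $[p,s]$. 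One then computes $h^2(p) = h(\delta) \geq c \geq s$ using $h(\delta) \geq c$, while $h^2(s) = h(\gamma) \leq \alpha \leq p$, realizing the desired configuration with $k = 2$, $a = p$, $b = s$.

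In the remaining case $h > \gamma$ strictly on $[p,c]$, so $h([p,c]) \subseteq (\gamma,\delta]$ and one can iterate $h$ while staying in the controlled region $[\gamma,\delta]$. The orbit $\delta_j := h^j(\delta)$ strictly decreases in $[\gamma,\delta]$ and, by absence of fixed points, drops below $\gamma$ at a first index $n \geq 2$. Iterating: as long as $h^k \geq \gamma$ throughout $[p,c]$ (so that $h^k([p,c]) \subseteq [\gamma,\delta]$ and $h^{k+1}$ is real-valued on $[p,c]$), proceed to step $k+1$; the process must stop by step $n$ because $h^n(c) = \delta_n < \gamma$. At the first step $k$ where $h^k$ attains $\gamma$ at some smallest $t \in (p,c]$, one has $h^k([p,t]) \subseteq [\gamma,\delta]$ and so $h^{k+1}$ is real-valued on $[p,t]$, with $h^{k+1}(p) = \delta_k$ and $h^{k+1}(t) = h(\gamma) \leq \alpha \leq p$. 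If $\delta_k \geq p$, the horseshoe condition is realized at level $h^{k+1}$ over $[p,t]$; if instead $\delta_k < p$, then $h^k(p) = \delta_{k-1} \geq \gamma \geq p$ and $h^k(c) = \delta_k < p \leq c$, and a direct IVT applied to $h^k - \mathrm{id}$ over $[p,c]$ produces a fixed point of $h^k$. The main obstacle is precisely this bookkeeping: the hypothesis controls $h$ only on $[\alpha,\beta] \cup [\gamma,\delta]$ and not on the gap $(\beta,\gamma)$, so one must restrict to nested subintervals on which all intermediate iterates land inside $[\gamma,\delta]$ before the final application of $h$ triggers the sign change needed for IVT.
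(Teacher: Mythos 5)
Your argument is correct, but it takes a genuinely different route from the paper's. The paper normalizes the data (shrinking $\beta$ so that $h(\beta)=h(\delta)$ and $h\ge h(\delta)$ on $[\alpha,\beta]$, then introducing $\tilde\alpha,\tilde\gamma$) and outsources the combinatorial core to the cited \cite[Lemma~A.1]{DSS}; its proof is therefore three lines long but not self-contained. You instead prove everything from scratch: after reducing to $h<\mathrm{id}$ on $[\gamma,\delta]$ and locating $p$ and $c$, you either find a genuine two-interval crossing $h\colon[p,s]\to[\gamma,\delta]$ with $h^2(p)\ge s$, $h^2(s)\le p$, or you follow the orbit $\delta_k=h^k(\delta)$ through $[\gamma,\delta]$ until it escapes below $\gamma$ and extract a sign change of $h^k-\mathrm{id}$ or $h^{k+1}-\mathrm{id}$ on a carefully nested subinterval of $[p,c]$. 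Your bookkeeping of which iterates stay inside $[\alpha,\beta]\cup[\gamma,\delta]$ (the only region where the hypotheses control $h$) is exactly the point that makes the lemma nontrivial, and you handle it properly. The gain of your approach is self-containedness; the cost is length. One small imprecision: in the sub-case $\delta_k\ge p$ you claim ``the horseshoe condition is realized over $[p,t]$,'' but $\delta_k\ge p$ only gives $h^{k+1}(p)\ge p$, not $h^{k+1}(p)\ge t$ as your stated condition $h^{k}(a)\ge b$ would require. This is harmless: the weaker inequalities $h^{k+1}(p)\ge p$ and $h^{k+1}(t)\le p\le t$ already force $h^{k+1}-\mathrm{id}$ to change sign on $[p,t]$, which is all the intermediate value theorem needs, so the conclusion stands; you should just state the endpoint inequalities you actually use. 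Similarly, your assertion $n\ge 2$ holds only because in the remaining case $\delta_1=h(c)>\gamma$, a fact worth recording explicitly, though nothing in the argument actually depends on it.
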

\begin{proof}
	Suppose that $\Fix(h) \cap ([\alpha,\beta] \cup [\gamma,\delta]) = \emptyset$.
	Then $h(x) > x$ on $[\alpha,\beta]$ and $h(x) < x$ on $[\gamma,\delta]$.
	By replacing $\beta$ with an appropriate point from $(\alpha,\beta]$, if necessary, we may assume that $\beta<h(\beta)=h(\delta)<\delta$ and $h(x)\geq h(\delta)$ for every
	$x \in [\alpha,\beta]$. Set $\tilde{\alpha} = \max \{t \in [\alpha,\beta] : h(t) = \delta \}$ and $\tilde{\gamma} = \max \{t \in [\gamma,\delta] : h(t) = \tilde\alpha \}$.
	By applying \cite[Lemma~A.1]{DSS} to $f=h|_{[\tilde\alpha,\beta]} \colon [\tilde\alpha,\beta] \to
	[\tilde\alpha,\delta]$ and $g=h|_{[\tilde\gamma,\delta]} \colon [\tilde\gamma,\delta] \to
	[\tilde\alpha,\delta]$, we obtain $\Per(h) \cap [\tilde\alpha,\beta] \neq \emptyset$.
\end{proof}

\begin{lemma}\label{L:recper2}
	Let $X$ be a Hausdorff space with a free arc $A=[0,1]$ and $f:X\to X$ be a continuous map without periodic points in $A$. Let $a,b\in A$ be such that $0<a < f(a)\leq f(b) < b<1$. Then for every $x\in A$:
	\begin{enumerate}
		\item\label{L:recper2:1} if $x<a$ then $x<f(x)\leq b$;
		\item if $x>b$ then $x>f(x)\geq a$.
	\end{enumerate}
\end{lemma}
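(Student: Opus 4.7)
The plan is to prove part~(1) by contradiction---assume $f(x)\notin(x,b]$ for some $x<a$ in $A$ and locate a periodic point of $f$ inside $A$---and then deduce~(2) by a mirror argument. The two tools are Lemma~\ref{L:eaist.I'}, used to extract an arc on which $f$ rises from $f(a)$ to $1$ (or falls from $f(b)$ to $0$), and Lemma~\ref{DSS-A.1}, a horseshoe-type criterion that produces a periodic point from two suitably placed arcs.

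The preparatory step is the construction of a point $b^{*}\in(a,b)$ with $f(b^{*})=0$ and $f([b^{*},b])\subseteq A$. Because $f(a)>a$, $f(b)<b$, and $f$ has no periodic points in $A$, the real-valued IVT forbids $f([a,b])\subseteq A$; setting $b^{*}=\sup\{t\in[a,b]:f(t)\notin A\}$, closedness of $A$ and openness of $(0,1)$ in $X$ force $f(b^{*})\in\{0,1\}$, while the sign of $f-\mathrm{id}$ at $b$ together with the absence of fixed points pins down $f<\mathrm{id}$ on $[b^{*},b]$ and hence $f(b^{*})=0$.

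For~(1), fix $x\in A$ with $x<a$ and assume $f(x)\notin(x,b]$. I split into two scenarios. \emph{Scenario A} $(f([x,a])\subseteq A)$: since $f(x)\neq x$, the only bad possibilities are $f(x)<x$ or $f(x)>b$. The former yields a fixed point in $[x,a]$ directly from IVT applied to $f-\mathrm{id}$. For the latter, I invoke Lemma~\ref{DSS-A.1} with $\alpha=x$, $\beta=a$, $\gamma=b^{*}$, $\delta=b$; the required $f(\alpha)>b$, $f(\gamma)=0\le\alpha$, and $f(b)\in[f(a),f(x)]\subseteq f([x,a])$ are straightforward. \emph{Scenario B} $(f([x,a])\not\subseteq A)$: pick any $x^{*}\in[x,a]$ with $f(x^{*})\notin A$ and apply Lemma~\ref{L:eaist.I'}(3) to obtain an arc $I=[p,q]\subseteq[x^{*},a]$ with $f\colon I\searrow[f(a),1]$. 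Then Lemma~\ref{DSS-A.1} with $\alpha=p$, $\beta=q$, $\gamma=b^{*}$, $\delta=b$ applies cleanly---$f(p)=1\ge b$, $f(b^{*})=0\le p$, and $f(b)\in[f(a),1]=f(I)$---and again produces a periodic point in $A$, contradicting the hypothesis on $f$.

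For~(2), the mirror setup uses $a^{*}=\inf\{t\in[a,b]:f(t)\notin A\}$ with $f(a^{*})=1$, $f([a,a^{*}])\subseteq A$, and $f>\mathrm{id}$ on $[a,a^{*}]$. One splits $x>b$ into analogous scenarios A' and B' and applies the mirror version of Lemma~\ref{DSS-A.1}, taking $\alpha=a$, $\beta=a^{*}$, and $(\gamma,\delta)=(b,x)$ in A' or $(\gamma,\delta)=(p,q)$ in B', where in B' the arc $[p,q]$ comes from Lemma~\ref{L:eaist.I'}(2) applied to a point $x^{*}\in[b,x]$ with $f(x^{*})\notin A$. The main (modest) obstacle is that Lemma~\ref{DSS-A.1} is stated with a fixed orientation matching~(1); its mirror variant, with the roles of the two arcs swapped, has a proof identical to that of Lemma~\ref{DSS-A.1}, so either invoking it or briefly re-deriving it completes the argument.
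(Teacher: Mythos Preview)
Your argument is correct, but it is organized differently from the paper's and is somewhat longer. The key difference is the choice of free arc on which Lemma~\ref{L:eaist.I'} and Lemma~\ref{DSS-A.1} are applied. You work throughout in the ambient arc $A=[0,1]$; this forces the preparatory construction of $b^{*}$ (so that $[b^{*},b]$ is guaranteed to land in $A$) and the Scenario~A/B split on the left side (since $f(x)\notin(x,b]$ need not mean $f(x)\notin[0,1]$). The paper instead observes that $[x,b]$ is itself a free arc in $X$ (its interior lies in $(0,1)$, which is open in $X$) and that $f(x)\notin[x,b]$ already; it then applies Lemma~\ref{L:eaist.I'} twice relative to the arc $[x,b]$---once on the left with the pair $(a,x)$ to get $I'\subseteq[x,a]$ with $f\colon I'\searrow[f(a),b]$, and once on the right with the pair $(b,y)$, where $y\in(a,b)$ is any point with $f(y)\notin[0,1]$ (such $y$ exists by exactly your IVT observation), to get $I''\subseteq[y,b]$ with $f\colon I''\nearrow[x,f(b)]$. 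A single application of Lemma~\ref{DSS-A.1} to $I',I''$ (still relative to $[x,b]$) then finishes part~(1) without any case distinction. Part~(2) is disposed of by the paper with the same ``by symmetry'' remark you use, so your mirror-variant comment matches the paper's treatment.

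What your route buys is that each invocation of Lemmas~\ref{L:eaist.I'} and~\ref{DSS-A.1} is made literally in the arc $[0,1]$ as these lemmas are stated, so no re-identification of the arc is needed. What the paper's route buys is brevity: shrinking the arc to $[x,b]$ makes the two ``bad'' behaviors (leaving $A$ versus overshooting~$b$) coincide, eliminating both the preparatory step and the case split.
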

\begin{proof}
	By a simple symmetry argument it is sufficient to verify (\ref{L:recper2:1}). On the contrary, assume that
	$f(x) \notin (x,b]$ for some $0 \leq x <a$. Then in fact $f(x)\not\in[x,b]$,
	for $f$ has no fixed points in $A$.
	Since $f(a) \in (x,b)$, Lemma~\ref{L:eaist.I'}(3) yields the existence of
    an arc $I' \subseteq [x,a]$ such that $f \colon I' \searrow [f(a),b]$.
	Further, since $f$ has no fixed point in $[a,b]$, there is $y \in (a,b)$ such that $f(y) \notin [0,1]$.
	As $f(y) \notin [x,b]$ and $f(b) \in (x,b)$, by Lemma~\ref{L:eaist.I'}(4) there is an arc $I'' \subseteq [y,b]$ such that $f \colon I'' \nearrow [x,f(b)]$.
	Applying now Lemma~\ref{DSS-A.1} to $[\alpha,\beta]=I'$ and $[\gamma,\delta]=I''$ yields a periodic point
	of $f$ in $[x,b]$, a contradiction.
\end{proof}

\begin{lemma}\label{L:arc-orb}
	Let $X$ be a Hausdorff space with a free arc $A=[0,1]$ and $f:X\to X$ be a continuous map without periodic points in $A$. Assume that $0<f(0)=c=f(1)<1$. Then $\Orb_f(c)\subseteq(0,1)$.
\end{lemma}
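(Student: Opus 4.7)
My plan is to prove that $f^n(c)\in(0,1)$ for every $n\geq 0$ by induction on $n$, with the base case $n=0$ immediate from $0<c<1$. For the inductive step I will assume $f^k(c)\in(0,1)$ for all $0\leq k\leq n$ and set $y=f^n(c)\in(0,1)$ and $z=f^{n+1}(c)=f(y)$; the goal is to show $z\in(0,1)$ by excluding the three alternatives $z=0$, $z=1$, and $z\notin[0,1]$.

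The two boundary alternatives are easy. If $z=0$, then $f^{n+2}(c)=f(0)=c$, and if $z=1$, then $f^{n+2}(c)=f(1)=c$; in either case $c$ would be a periodic point of $f$ lying in $A$, contradicting the hypothesis that $A\cap\Per(f)=\emptyset$. It remains to rule out $z\notin[0,1]$, which is the substantive case.

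Assuming $z\notin[0,1]$, I will apply Lemma~\ref{L:eaist.I'}(1) with $a=0$ and $b=y$ (whose hypotheses hold because $f(0)=c\in(0,1)$ and $f(y)=z\notin[0,1]$) to obtain an arc $I_1=[\alpha,\beta]\subseteq[0,y]$ with $f\colon I_1\nearrow[c,1]$, that is, $f(\alpha)=c$, $f(\beta)=1$ and $f(I_1)=[c,1]$. Symmetrically, Lemma~\ref{L:eaist.I'}(4) with $a=1$ and $b=y$ produces an arc $I_2=[\gamma,\delta]\subseteq[y,1]$ with $f\colon I_2\nearrow[0,c]$, i.e.\ $f(\gamma)=0$, $f(\delta)=c$ and $f(I_2)=[0,c]$. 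Consequently $0\leq\alpha<\beta\leq y\leq\gamma<\delta\leq 1$. Because $f$ has no fixed point in $A$, the continuous function $t\mapsto f(t)-t$ keeps a constant nonzero sign on each of $I_1$ and $I_2$; combining this with $f(\beta)-\beta=1-\beta>0$ and $f(\gamma)-\gamma=-\gamma<0$ yields $f(t)>t$ on $I_1$ and $f(t)<t$ on $I_2$, and in particular $\alpha<c<\delta$.

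To close the argument, I will invoke Lemma~\ref{DSS-A.1} to produce a periodic point of $f$ lying in $I_1\cup I_2\subseteq A$, contradicting the hypothesis. With the naive identifications $[\alpha_0,\beta_0]=I_1$ and $[\gamma_0,\delta_0]=I_2$, the required inequalities $f(\gamma_0)=0\leq\alpha_0$ and the membership $f(\delta_0)=c\in[c,1]=f([\alpha_0,\beta_0])$ are immediate, but the critical endpoint comparison $f(\alpha_0)\geq\delta_0$ reduces to $c\geq\delta$ and fails because $\delta>c$. The main obstacle of the proof is therefore to refine $I_1$ and $I_2$ to suitable subarcs whose $f$-images are appropriately truncated so that every hypothesis of Lemma~\ref{DSS-A.1} is satisfied simultaneously; I expect this to be accomplished by a short case analysis based on the relative positions of $y$, $c$, $\alpha$, and $\delta$ (for instance replacing $I_1$ by a subarc mapping onto $[\delta,1]$ and $I_2$ by a subarc ending at a preimage of $\alpha$), after which Lemma~\ref{DSS-A.1} supplies the desired periodic point in $A$ and the induction is complete.
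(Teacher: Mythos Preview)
Your induction framework and the easy elimination of $z\in\{0,1\}$ are fine, and your use of Lemma~\ref{L:eaist.I'} to produce $I_1\subseteq[0,y]$ with $f(I_1)=[c,1]$ and $I_2\subseteq[y,1]$ with $f(I_2)=[0,c]$ is correct. The gap is exactly where you flag it: the ``short case analysis'' meant to force the hypotheses of Lemma~\ref{DSS-A.1} is not carried out, and the specific refinement you propose cannot work with $h=f$. Indeed, for any subarcs $[\alpha_0,\beta_0]\subseteq I_1$ and $[\gamma_0,\delta_0]\subseteq I_2$ one has $f([\alpha_0,\beta_0])\subseteq[c,1]$ and $f(\delta_0)\in[0,c]$, so the requirement $f(\delta_0)\in f([\alpha_0,\beta_0])$ forces $f(\delta_0)=c$; but if you first shrink $I_1$ so that $f([\alpha_0,\beta_0])=[\delta,1]$, then you need $f(\delta_0)\in[\delta,1]$, which is impossible since $f(\delta_0)\leq c<\delta$. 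No rearrangement of subarcs of $I_1,I_2$ with $h=f$ escapes this obstruction, because the images $f(I_1)$ and $f(I_2)$ meet only at the single point $c$.

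The paper avoids this by bringing in the higher iterate. Working (in your notation) with the first $n$ for which $z=f^{n+1}(c)\notin(0,1)$ and assuming without loss of generality $y\le c$, it applies Lemma~\ref{L:eaist.I'}(4) to $f^{n+1}$ with $a=1$ and $b=c$: since $f^{n+1}(1)=f^{n}(c)=y\in(0,1)$ and $f^{n+1}(c)=z\notin[0,1]$, this yields an arc $I'\subseteq[c,1]$ with $f^{n+1}\colon I'\nearrow[0,y]$. Now $I_1\subseteq[0,y]=f^{n+1}(I')$ gives $K\subseteq I'$ with $f^{n+1}(K)=I_1$, hence $f^{n+2}(K)=f(I_1)=[c,1]\supseteq I'\supseteq K$, and $f^{n+2}$ has a fixed point in $K\subseteq A$. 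The point you are missing is that the hypothesis $y=f^n(c)$ lets you take $b=c$ (rather than $b=y$) when applying Lemma~\ref{L:eaist.I'} to the iterate, which buys the crucial containment $I'\subseteq[c,1]$; with that, a direct covering argument replaces Lemma~\ref{DSS-A.1} entirely.
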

\begin{proof}
	By contradiction. Let $n$ be the smallest positive integer with $f^n(c) \notin (0,1)$. Then
	$f^n(c) \notin [0,1]$, for otherwise 0 or 1 would be a periodic point of $f$. Put $d=f^{n-1}(c) = f^n(0) = f^n(1) \in (0,1)$.
	Without loss of generality, we may assume that  $d\le c$. By applying Lemma~\ref{L:eaist.I'}(4) to $f^n$,
	$a=1$ and $b=c$, we find an arc $I' \subseteq [c,1]$ such that $f^n \colon I' \nearrow [0,d]$.
	Applying Lemma~\ref{L:eaist.I'}(1) to $f$, $a=0$ and $b=d$ yields an arc $I'' \subseteq [0,d]$ with
	$f \colon I'' \nearrow [c,1]$. Consequently, $I'$ contains an arc $K$ such that
	$f^{n+1}(K) = [c,1] \supseteq K$. Thus $f^{n+1}$ has a fixed point in $K$, a contradiction.
\end{proof}

\begin{lemma}\label{L:homeo-J}
	Let $X$ be a Hausdorff space with a free interval $J$ and let $\Phi$ be a minimal action of an abelian semigroup $S$ on $X$. Then all the acting maps $\varphi_s$ of $\Phi$ are injective on $J$.
\end{lemma}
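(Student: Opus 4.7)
The plan is to argue by contradiction: assume some $f := \varphi_s$ fails to be injective on $J$, and produce a periodic point of $f$ in $J$. By Lemma~\ref{L:facts H} the action $\Phi$ is free, so either $\varphi_s = \Id_X$ (in which case it is trivially injective on $J$) or $f$ has no fixed points in $X$; applying the same lemma to every iterate $\varphi_{ns}=f^n$, if $f$ has any periodic point in $X$, then $f^n = \Id_X$ for some $n\ge1$, making $f$ a bijection of $X$ with inverse $f^{n-1}$ and in particular injective on $J$---again a contradiction. Thus from now on $f$ has no periodic points in $X$, and producing one will close the argument.

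Fix distinct $p,q\in J$ with $f(p)=f(q)=:c$, identify $J$ with $(0,1)$ so that we have a linear order, and assume $p<q$. The principal case is $p<c<q$; the cases $c\notin(p,q)$ (including $c\notin J$) reduce to this one by a technical use of Lemma~\ref{L:eaist.I'}, either by replacing $(p,q)$ with a pair of preimages of a nearby value situated strictly between them, or by extracting a configuration that directly feeds into Lemma~\ref{DSS-A.1}.

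In the main case $p<c<q$, view $[p,q]\subseteq J$ as a free arc rescaled to $[0,1]$; Lemma~\ref{L:arc-orb} then applies and gives $\Orb_f(c)\subseteq(p,q)$. Enlarge $[p,q]$ to a free arc $A=[P,Q]\subseteq J$ with $P<p$ and $q<Q$, possible since $p$ and $q$ are interior points of the free interval $J$, and apply Lemma~\ref{L:recper2} to $A$ with $a=p$, $b=q$ (the hypothesis $P<p<c=f(p)\le f(q)=c<q<Q$ holds): this yields $x<f(x)\le q$ for $x\in[P,p)$ and $p\le f(x)<x$ for $x\in(q,Q]$, hence $f(A\setminus[p,q])\subseteq A$. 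If additionally $f([p,q])\subseteq A$, then $A$ is a closed $f$-invariant subspace of $X$ homeomorphic to a compact interval and so has the fixed point property, producing a fixed point of $f$ in $A\subseteq J$---contradiction. Otherwise some $r\in[p,q]$ satisfies $f(r)\notin A$, and two applications of Lemma~\ref{L:eaist.I'}---one on $[p,r]$ and one on $[r,q]$---produce sub-arcs $[\alpha,\beta]\subseteq[p,r]$ and $[\gamma,\delta]\subseteq[r,q]$ on which $f$ is monotone onto prescribed long sub-arcs of $A$; a verification of the conditions $f(\alpha)\ge\delta$, $f(\gamma)\le\alpha$, $f(\delta)\in f([\alpha,\beta])$ then lets Lemma~\ref{DSS-A.1} apply with $h=f$, producing a periodic point of $f$ in $J$.

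The main obstacle is this final step: choosing $r$ and the sub-arcs $[\alpha,\beta],[\gamma,\delta]$ so that all three numerical hypotheses of Lemma~\ref{DSS-A.1} are simultaneously met. This requires careful tracking of how $f|_{[p,q]}$ exits the arc $A$ (through $P$, through $Q$, or outside $J$ altogether) and returns, together with the monotone-image data supplied by Lemma~\ref{L:eaist.I'}; the analogous reduction of the cases $c\notin(p,q)$ to the main case is similarly technical.
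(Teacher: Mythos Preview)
Your proof uses minimality of $\Phi$ only to conclude that $f=\varphi_s$ and its iterates have no periodic points, and thereafter attempts to extract a periodic point from non-injectivity of $f$ on $J$ alone. That cannot succeed. Take a Denjoy circle homeomorphism $g$ with a wandering open arc $I_0$, and modify $g$ inside $I_0$ to a non-monotone continuous map $f$ with $f(\overline{I_0})\subseteq \overline{g(I_0)}$. Then $f$ has no periodic points and is non-injective on any free interval $J\supseteq I_0$, yet every pair $p<q$ with $f(p)=f(q)$ lies in $I_0$ while the common value $c$ lies in the disjoint arc $g(I_0)$, so the principal configuration $p<c<q$ is unreachable for $f$ and for all its iterates; Lemma~\ref{L:eaist.I'} alone cannot manufacture it. The paper instead uses minimality of $\Phi$ a second time: given $\varphi_s(a)=\varphi_s(b)$, one picks $r\in S$ with $\varphi_r\varphi_s(a)\in(a,b)$ (dense orbits) and passes to $\varphi_t=\varphi_{r+s}$, which now satisfies $\varphi_t(a)=\varphi_t(b)\in(a,b)$ and is still aperiodic.

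Even granting the principal case, your closing step via Lemma~\ref{DSS-A.1} is incomplete: the natural arcs supplied by Lemma~\ref{L:eaist.I'} give $f(\alpha)=c$ and only $\delta\le q$, so the hypothesis $h(\alpha)\ge\delta$ is not secured. The paper finishes by a different route that again exploits minimality of $\Phi$: by Lemma~\ref{L:facts X}(\ref{IT:7}) the set $\Min(\varphi_t)$ is dense in $X$, so $\varphi_t$ has a minimal set $M$ meeting $J$ to the left of $a$; choosing $z<x<a$ in $M\cap J$ (possible since $M$ has no isolated points), Lemma~\ref{L:recper2} applied to each $(\varphi_t)^n$ traps the forward $\varphi_t$-orbit of $x$ in $(x,b]$, so $z$ is never approached --- contradicting minimality of $M$.
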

\begin{proof}
	We shall proceed by contradiction. So assume that $s\in S$ and $a,b\in J$ are such that $a<b$ and $\varphi_s(a)=\varphi_s(b)$. By minimality of $\Phi$, there is $r\in S$ with $c:=\varphi_r\varphi_s(a)=\varphi_r\varphi_s(b)\in(a,b)$. Set $t=r+s\in S$. Notice that $\varphi_t$ has no periodic points. (Indeed, if $(\varphi_t)^k(y)=y$ for some $k\geq1$ and $y\in X$ then $(\varphi_t)^k$ would be the identity by Lemma~\ref{L:facts H}, hence $\varphi_t$ would be injective.) Thus, we may apply Lemma~\ref{L:arc-orb} to $f=\varphi_t$ and $A=[a,b]$ to obtain $\Orb_{\varphi_t}(c)\subseteq(a,b)$.
	
	Since the closure of $\Orb_{\varphi_t}(c)$ is compact, the set $\Min(\varphi_t)$ is nonempty and, by Lemma~\ref{L:facts X}(\ref{IT:7}), it is dense in $X$. Consequently, $\varphi_t$ has a minimal set $M$, which intersects $J$ on the left of $a$. As $M$ is not a periodic orbit for $\varphi_t$, it has no isolated points. Therefore, there exist $z,x\in M\cap J$ with $z<x<a$. Now, given $n\in\mathbb N$, we have $a<(\varphi_t)^n(a)=(\varphi_t)^n(b)<b$. Thus, by Lemma~\ref{L:recper2}(1) applied to $f=(\varphi_t)^n$, we obtain $x<(\varphi_t)^n(x)\leq b$. Since the latter holds for every $n\in\mathbb N$ and since $z<x$, we infer that $z$ is not contained in the orbit closure of $x$ under the action of $\varphi_t$. This contradicts the fact that $M$ is a minimal set of $\varphi_t$, which finishes the proof.
\end{proof}

\begin{lemma}\label{L:fin.many.arcs}
	Let $X$ be a Hausdorff space, which can be expressed as a union of finitely many arcs. Then the union of the free intervals of $X$ is dense in $X$.
\end{lemma}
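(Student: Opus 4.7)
The plan is to argue by induction on the number $n$ of arcs appearing in an expression $X=A_1\cup\cdots\cup A_n$, proving the equivalent statement that every nonempty open $U\subseteq X$ contains a free interval of $X$. Equivalence is immediate: if a free interval $J$ meets an open set $U$, then $J\cap U$ is an open subset of $J\cong(0,1)$ and therefore contains some open subinterval homeomorphic to $(0,1)$, which is a free interval contained in $U$.

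For the base case $n=1$, the space $X$ is itself an arc with some parameterizing homeomorphism $\phi\colon[0,1]\to X$. A nonempty open $U\subseteq X$ pulls back to a nonempty open subset of $[0,1]$; since no nonempty subset of $\{0,1\}$ is open in $[0,1]$, the pullback must meet $(0,1)$ and hence contain an open interval $(a,b)$. Its image $\phi((a,b))$ is then a free interval of $X$ inside $U$.

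For the inductive step, observe that each $A_i$ is compact, as a continuous image of $[0,1]$, and hence closed in the Hausdorff space $X$. If some $A_i$ is already contained in $\bigcup_{j\neq i}A_j$, we discard it and apply the inductive hypothesis to the smaller cover. Otherwise each $W_i:=A_i\setminus\bigcup_{j\neq i}A_j$ is nonempty and, as the complement of a closed set, open in $X$. Given a nonempty open $U\subseteq X$, we split into cases. If $U\cap W_n\neq\emptyset$, then $V:=U\cap W_n$ is an open subset of $X$ entirely contained in $A_n$, and via the parameterization of $A_n$ we locate an open subinterval of $A_n$ contained in $V$. If instead $U\cap W_n=\emptyset$, then $U\subseteq Y:=A_1\cup\cdots\cup A_{n-1}$, a Hausdorff subspace covered by $n-1$ arcs, so the inductive hypothesis applied to $Y$ produces a free interval $J$ of $Y$ contained in $U$.

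Both cases are then completed by the same elementary topological principle: \emph{if $V\subseteq X$ is open in $X$ with $V\subseteq A$ for some $A\subseteq X$, then any $Z\subseteq V$ that is open in the subspace $A$ is also open in $X$}. Writing such a $Z$ as $A\cap V'$ for some $V'$ open in $X$ and using $Z\subseteq V\subseteq A$ yields $Z=V\cap V'$, which is open in $X$. Applied with $A=A_n$ in the first case and $A=Y$ in the second, this promotes our candidate to an open subset of $X$ homeomorphic to $(0,1)$, completing the induction. The only delicate point is this promotion principle; once it is in place the argument is routine, so I expect no serious obstacle.
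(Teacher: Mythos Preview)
Your proof is correct and follows essentially the same approach as the paper's. The paper phrases the induction as a single two-set claim---if $X=Y\cup Z$ with $Y,Z$ closed and each having dense free intervals, then so does $X$---and splits into the cases $U\cap(Y\setminus Z)\neq\emptyset$, $U\cap(Z\setminus Y)\neq\emptyset$, $U\subseteq Y\cap Z$; your two cases $U\cap W_n\neq\emptyset$ and $U\subseteq Y$ amount to the same dichotomy (your second case absorbs two of the paper's three), and both arguments hinge on the identical ``promotion'' observation that a set open in a subspace $A$ and lying inside an $X$-open $V\subseteq A$ is itself open in $X$. The preliminary step of discarding redundant $A_i$ is harmless but unnecessary, since your case split works regardless of whether $W_n$ is nonempty.
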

\begin{proof}
	Obviously, it suffices to prove the following claim:
	\begin{itemize}
		\item Let $X=Y\cup Z$ be a compact Hausdorff space, where $Y$ and $Z$ are closed subspaces of $X$, each of them having a dense union of free intervals in the relative topology. Then $X$ also has a dense union of free intervals.
	\end{itemize}
	To prove this claim, let $U$ be a nonempty open subset of $X$. We want to find a free interval $J$ of $X$ contained in $U$. We distinguish three cases.
	
	First assume that $U\cap(Y\setminus Z)\neq \emptyset$. The set $U\setminus Z$ is nonempty and open in $X$ and is contained in $Y$. By the assumption on $Y$, $U\setminus Z$ contains a free interval $J$ of $Y$. Clearly, $J$ is open in $U\setminus Z$ and hence also in $X$. Thus $J$ is a free interval of $X$ contained in $U$. The case $U\cap(Z\setminus Y)\neq \emptyset$ is handled analogously.
	
	It remains to consider the case $U\subseteq Y\cap Z$. Since $U\subseteq Y$, there is a free interval $J$ of $Y$ in $U$. The set $J$ is open in $U$, hence in $X$ and so it is a free interval of $X$ contained in $U$.
\end{proof}

\begin{theoremA}
	\thmA
\end{theoremA}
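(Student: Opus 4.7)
The plan is to use the rigidity imposed by a free interval first to endow $X$ with the structure of a finite topological graph, and then to prune away all singularities of this graph by exploiting minimality.

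First I would observe that every acting map $\varphi_s$ is surjective: $\varphi_s(X)$ is compact (hence closed) in $X$ and dense by Lemma~\ref{L:facts X}(\ref{IT:9}). Next, fix a non-degenerate closed subarc $A=[\alpha,\beta]\subseteq J$, so that $(\alpha,\beta)$ is also a free interval of $X$. Applying Lemma~\ref{L:dens.back.orb} to $V=(\alpha,\beta)$ yields a finite $F\subseteq S$ with $X=\bigcup_{r\in F}\varphi_r((\alpha,\beta))\subseteq\bigcup_{r\in F}\varphi_r(A)$. By Lemma~\ref{L:homeo-J}, each restriction $\varphi_r|_A$ is a continuous injection from the compact Hausdorff space $A$ into the Hausdorff space $X$, hence a homeomorphism onto its image, so each $\varphi_r(A)$ is an arc. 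Consequently $X$ is a finite union of arcs; in particular it is metrizable and carries the structure of a compact topological graph, whose singular set $\Sigma$ (the points lacking a neighborhood homeomorphic to an open interval) is finite, while $Y=X\setminus\Sigma$ is open and dense in $X$ (Lemma~\ref{L:fin.many.arcs}) and is a 1-manifold whose connected components are free intervals of $X$.

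The heart of the proof is to show that $\Sigma$ is $\Phi$-invariant. Granting this, $\Sigma=\emptyset$: otherwise the finite, closed, nonempty, $\Phi$-invariant set $\Sigma$ would contain the unique minimal set $X$ of $\Phi$, contradicting $|\Sigma|<\infty$ while $X\supseteq J$ is infinite. To verify the invariance, I would assume for contradiction that $v\in\Sigma$ and $w=\varphi_s(v)\in Y$. A neighborhood of $v$ consists of $k$ prongs, with $k=1$ (if $v$ is an endpoint) or $k\geq3$ (if $v$ is a branch point), whereas a neighborhood of $w$ is an open interval. Since $\varphi_s$ is injective on every free interval emanating from $v$, each prong is mapped monotonically into one of the two sides of $w$. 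A pigeonhole argument (two of the $k\geq3$ prongs land on the same side in the branch case, while in the endpoint case the single prong together with preimages of the far side of $w$, whose existence is forced by surjectivity of $\varphi_s$, play the analogous role) then produces two distinct points $x_1,x_2\in Y$ that lie in different free intervals of $X$ and satisfy $\varphi_s(x_1)=\varphi_s(x_2)$. It remains to find $u\in S$ such that $\varphi_u(x_1)$ and $\varphi_u(x_2)$ belong to a common free interval of $X$ and are still distinct; applying $\varphi_s$ then gives $\varphi_s(\varphi_u(x_1))=\varphi_u(\varphi_s(x_1))=\varphi_u(\varphi_s(x_2))=\varphi_s(\varphi_u(x_2))$, contradicting Lemma~\ref{L:homeo-J}.

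With $\Sigma=\emptyset$ in hand, $X$ is a compact 1-manifold without boundary, i.e., a finite disjoint union of circles. The acting maps $\varphi_s$ permute the components, and on each component $C$ the restriction $\varphi_s|_C$ is a continuous surjection of a circle onto a circle which is injective on the free arc $C\setminus\{p\}$ for every $p\in C$. A short topological argument (a continuous injection from $C\setminus\{p\}\cong\mathbb{R}$ into a circle has image a proper open arc, and surjectivity of $\varphi_s|_C$ combined with any hypothetical second preimage of $\varphi_s(p)$ in $C\setminus\{p\}$ would require $\varphi_s(p)$ to lie both in the image and in its complement) forces $\varphi_s|_C$ to be globally injective, hence a homeomorphism. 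Therefore every $\varphi_s$ is a homeomorphism of $X$, completing the proof.

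The principal obstacle is the simultaneous recurrence step at the end of the second paragraph: from a non-injective identification $\varphi_s(x_1)=\varphi_s(x_2)$ with $x_1,x_2$ in distinct free intervals, one must extract a single $u\in S$ for which $\varphi_u(x_1)$ and $\varphi_u(x_2)$ are distinct and share a common free interval. A natural route is to apply Lemma~\ref{L:dens.back.orb} jointly to the finitely many free intervals of $X$ and combine the resulting finite sets of return times by the commutativity of $S$, exploiting that $(x_1,x_2)$ lies off the diagonal so that the closed $\Phi$-invariant set $\overline{\Orb_\Phi(x_1,x_2)}\subseteq X\times X$ is forced to meet an off-diagonal open product $J_i\times J_i$ for some free interval $J_i$ of $X$.
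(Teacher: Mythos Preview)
Your overall architecture---realize $X$ as a finite graph, show the singular set $\Sigma$ is $\Phi$-invariant hence empty, and only then prove global injectivity---is natural and differs from the paper's route, which proves injectivity of every $\varphi_s$ \emph{first} (via a ``redundant open set'' argument for a single minimal $\varphi_p$-subsystem, using \cite[Theorem~20]{DSS} and \cite[Lemma~2.1]{KST}) and obtains the manifold structure as a consequence. Your final paragraph (local injectivity on circles forces global injectivity) is fine.

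The genuine gap is exactly where you flag it: the simultaneous recurrence step. Your proposed remedy does not work. By construction $x_1,x_2$ lie on \emph{different} prongs of $v$, hence $(x_1,x_2)\in J_{i_1}\times J_{i_2}$ with $i_1\neq i_2$, so the starting pair itself is not in any $J_i\times J_i$. Worse, for every $t\in s+S$ one has $(\varphi_t(x_1),\varphi_t(x_2))\in\Delta$, because $\varphi_s(x_1)=\varphi_s(x_2)$ and $S$ is commutative. Thus the only off-diagonal part of the $\Phi$-orbit of $(x_1,x_2)$ is indexed by $S\setminus(s+S)$, and nothing forces these finitely or infinitely many pairs to land in any $J_i\times J_i$. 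Concretely, take $S=\mathbb N$ and $s=1$: then $s+S=\{2,3,\dots\}$ and the entire forward orbit $\{(\varphi_n(x_1),\varphi_n(x_2)):n\ge 1\}$ lies in $\Delta$, so $\overline{\Orb_\Phi(x_1,x_2)}\subseteq\Delta$ and never meets $(J_i\times J_i)\setminus\Delta$. The closed invariant set you build therefore gives no contradiction with Lemma~\ref{L:homeo-J}. The endpoint case ($k=1$) is also not handled: surjectivity produces preimages of the far side of $w$, but it does not produce two \emph{distinct} points with a \emph{common} $\varphi_s$-image, which is what your reduction needs.

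To close the gap you would need a genuinely new ingredient that converts the identification $\varphi_s(x_1)=\varphi_s(x_2)$ (with $x_1,x_2$ in different free intervals) into a contradiction. The paper's solution is to pass to a minimal set $M$ of a single iterate $\varphi_p$, use the recurrence result \cite[Theorem~20]{DSS} to control $\varphi_p^{-1}(M)$ inside free intervals, and then exhibit a nonempty open $V\subseteq M$ with $\varphi_p(M\setminus V)\supseteq\varphi_p(V)$, which is impossible for a minimal system by \cite[Lemma~2.1]{KST}. If you want to salvage your graph-theoretic strategy, you will need either an analogue of this redundancy argument or some other device that does not rely on moving $(x_1,x_2)$ into a common $J_i\times J_i$.
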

\begin{proof}
First observe that $X$ is a union of finitely many arcs by Lemmas~\ref{L:dens.back.orb}	 and~\ref{L:homeo-J} and so, by Lemma~\ref{L:fin.many.arcs}, the union of the free intervals of $X$ is dense in $X$.
	
	We begin the proof by showing that the acting maps $\varphi_s$ of $\Phi$ are homeomorphisms of $X$. Since all $\varphi_s$ are surjective by Lemma~\ref{L:facts X}(\ref{IT:9}) and by compactness of the Hausdorff space $X$, it is sufficient to show that they are injective. So assume, on the contrary, that $\varphi_s(a)=\varphi_s(b)$ for some $s\in S$ and some distinct points $a,b\in X$. Use Lemma~\ref{L:dens.back.orb} to find $r\in S$, a free interval $J\subseteq X$ and $x\in J$ with $\varphi_r(x)=a$. Also, fix $y\in X$ with $\varphi_r(y)=b$ and notice that $y\neq x$. By minimality of $\Phi$, there are $t\in S$ and a free interval $J'\subseteq X$ with $\varphi_t\varphi_s(a)=\varphi_t\varphi_s(b)\in J'$. Set $p=r+s+t$.
	
	Fix a neighbourhood $W$ of $y$ in $X$ with $\varphi_p(W)\subseteq J'$
	and choose a free interval $I\subseteq W$ of $X$. Since $\varphi_p$ has no fixed points
	by Lemma~\ref{L:facts H}, the points $x$, $y$ and $\varphi_p(x)=\varphi_p(y)$ are mutually distinct and so we may assume that $J,I,J'$ are mutually disjoint. Moreover, due to Lemma~\ref{L:homeo-J}, we may also suppose that $\varphi_p\colon J\to J'$ is a homeomorphism and $\varphi_p\colon I\to J'$ is an open map (in fact, a homeomorphism onto the open subset $\varphi_p(I)$ of $J'$).

	By Lemma~\ref{L:facts X}(\ref{IT:7}) and compactness of $X$, $I$ intersects a minimal set $M$ for $\varphi_p$. We claim that
	\begin{enumerate}
		\item[($*$)] $K\cap\varphi_p^{-1}(M)\subseteq M$ for every free interval $K$ of $X$.
	\end{enumerate}
	Indeed, since the map $\varphi_p$ has no periodic points and it has dense minimal (hence recurrent) points, \cite[Theorem~20]{DSS} yields $K\cap\varphi_p^{-1}(M)\subseteq\Rec(\varphi_p)\cap\varphi_p^{-1}(M)\subseteq M$, where the last inclusion follows from minimality of $M$ for $\varphi_p$.
	
	Now set $U=\varphi_p(I\cap M)$ and $V=J\cap\varphi_p^{-1}(U)$. Clearly, both $U$ and $V$ are nonempty; we want to verify that they are open subsets of $M$. First, by applying ($*$) to $K=I$, we get $I\cap\varphi_p^{-1}(M)\subseteq M$, whence $U=\varphi_p(I\cap M)=\varphi_p(I)\cap M$ is open in $M$. Further, by applying ($*$) to $J$, we obtain $V=J\cap\varphi_p^{-1}(U) \subseteq M$ and hence $V = J\cap (\varphi_p|_M)^{-1}(U)$ is also an open subset of $M$.
	
	We want to arrive at a contradiction by showing that $V$ is a redundant open set for the minimal system $(M,\varphi_p)$, meaning that $\varphi_p(M\setminus V)\supseteq \varphi_p(V)$ (see \cite[Lemma~2.1]{KST}). Indeed, since $V$ is disjoint with $I$, we have $\varphi_p(M\setminus V)\supseteq\varphi_p(I\cap M)= U$. Moreover, since the restriction $\varphi_p \colon J\to J'$ is a homeomorphism, $U=\varphi_p(V)$ by definition of $V$. Thus, indeed, $\varphi_p(M\setminus V) \supseteq \varphi_p(V)$ and this contradiction shows that all the acting maps $\varphi_s$ of $\Phi$ are homeomorphisms of $X$.
	
	Now we finish the proof of the theorem. Fix a free interval $J$ of $X$. By compactness of $X$ and minimality of $\Phi$, there is a finite set $F\subseteq S$ with $X=\bigcup_{r\in F}\varphi_r(J)$ (see Lemma~\ref{L:dens.back.orb}) and so $X$ is a union of finitely many free intervals. Thus, $X$ is a compact Hausdorff second countable $1$-dimensional manifold, and hence it is homeomorphic to a disjoint union of finitely many circles.
\end{proof}

So, for instance the Warsaw circle does not admit a minimal action of an abelian semigroup, although this follows also from Corollary~\ref{C:fixed-point-property} and the fact that the Warsaw circle has the fixed point property.
For an example which does not follow from Corollary~\ref{C:fixed-point-property},
take any compact Hausdorff space with a free interval having a circle as its retract.
(Notice that such a space does not have the fixed point property since the fixed point property is preserved by passing to a retract.)

\begin{theoremB}
	\thmB
\end{theoremB}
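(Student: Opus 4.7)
The plan is to use Theorem~A to reduce to $X=\mathbb S^1$, produce a common invariant probability measure for the abelian action via Markov-Kakutani, and convert it into an explicit conjugacy to an action by rotations.

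By Theorem~A, $X$ is a disjoint union of finitely many circles and each $\varphi_s$ is a homeomorphism of $X$; since $X$ is connected, $X$ is a single circle, which I identify with $\mathbb S^1$. Every $\varphi_s$ must be orientation-preserving, for an orientation-reversing homeomorphism of $\mathbb S^1$ has a fixed point (a standard intermediate-value argument applied to a lift to $\mathbb R$), and by Lemma~\ref{L:facts H} such a $\varphi_s$ would have to equal $\Id_X$, which is orientation-preserving. Note that I cannot hope to find a single $\varphi_s$ that is already minimal (and therefore conjugate to an irrational rotation by Poincar\'e), because all $\varphi_s$ could in principle be of finite order, as in the minimal action of $\mathbb Q/\mathbb Z$ on $\mathbb S^1$ by rotations; this is the reason an invariant measure is needed.

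Next I produce the invariant measure. The commuting family $\{(\varphi_s)_\ast\}_{s\in S}$ consists of continuous affine self-maps of the weak-$\ast$ compact convex set of Borel probability measures on $\mathbb S^1$, so the Markov-Kakutani fixed point theorem furnishes a common fixed point: a Borel probability measure $\mu$ on $\mathbb S^1$ with $(\varphi_s)_\ast\mu=\mu$ for every $s\in S$. Combining minimality of $\Phi$ with Lemma~\ref{L:dens.back.orb}, every nonempty open $U\subseteq \mathbb S^1$ satisfies $\mathbb S^1=\bigcup_{r\in F}\varphi_r(U)$ for some finite $F\subseteq S$, whence $\mu(U)>0$; so $\mu$ has full support. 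Moreover $\mu$ has no atoms: the $S_\Phi$-orbit of a hypothetical atom would be simultaneously dense in $\mathbb S^1$ (by minimality of $\Phi$) and finite (since $\mu(\{\varphi_s x\})=\mu(\{x\})$ combined with $\mu(\mathbb S^1)=1$ bounds the number of atoms of any given mass), which is impossible on the infinite space $\mathbb S^1$.

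Identify $\mathbb S^1$ with $\mathbb R/\mathbb Z$, fix a basepoint $0\in \mathbb S^1$, and define $h\colon \mathbb S^1\to\mathbb S^1$ by $h(x)=\mu([0,x])$, where $[0,x]$ denotes the positively oriented arc from $0$ to $x$. Full support and atomlessness of $\mu$ make $h$ an orientation-preserving homeomorphism, and by construction $h_\ast\mu$ equals normalized Lebesgue measure $\lambda$ on $\mathbb S^1$. The conjugate action $\Psi$ with acting maps $\psi_s=h\circ\varphi_s\circ h^{-1}$ therefore consists of orientation-preserving homeomorphisms of $\mathbb S^1$ preserving $\lambda$; any lift of such a $\psi_s$ to $\mathbb R$ is a strictly increasing map commuting with integer translations and sending every bounded interval to an interval of equal Lebesgue measure, which forces it to be a translation. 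Hence each $\psi_s$ is a rotation, and $\Phi$ is topologically conjugate to an action of $S$ on $\mathbb S^1$ by rotations. The main technical point is the appeal to Markov-Kakutani for a common invariant measure of the potentially uncountable abelian $S$; everything else is elementary bookkeeping on $\mathbb S^1$.
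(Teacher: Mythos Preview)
Your proof is correct and self-contained, but it takes a different route from the paper. The paper's argument is shorter because it outsources the hard step: after reducing to $X=\mathbb S^1$ with orientation-preserving acting homeomorphisms (exactly as you do), it passes to the generated abelian group $G_\Phi\subseteq\Homeo_+(\mathbb S^1)$ and invokes \cite[Corollary~5.15]{Ghy}, which says that any abelian subgroup of $\Homeo_+(\mathbb S^1)$ acting minimally is conjugate to a group of rotations.

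What you have done, in effect, is reprove that corollary from scratch. You obtain the common invariant probability measure via Markov--Kakutani rather than by citing amenability, and then run the standard ``cumulative distribution function'' conjugacy $h(x)=\mu([0,x])$ to send $\mu$ to Lebesgue measure, forcing the conjugated maps to be rotations. Your verification that $\mu$ is fully supported and atomless (using Lemma~\ref{L:dens.back.orb} and the mass-counting argument on orbits, respectively) is clean and correct. The trade-off is clear: the paper's proof is two lines once Theorem~A is in hand but depends on an external reference, while yours is longer but makes the paper more self-contained and transparent about \emph{why} minimality plus commutativity yields the rigidity. Your remark that no single $\varphi_s$ need be minimal (the $\mathbb Q/\mathbb Z$ example) is a nice justification for why one cannot simply invoke Poincar\'e's classification of a single circle homeomorphism.
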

\begin{proof}
	By Theorem~A, $X$ is a circle (so we may assume that $X=\mathbb S^1$) and all the acting maps of $\Phi$ are homeomorphisms. Moreover, by Lemma~\ref{L:facts H}, the action $\Phi$ is free. Since every orientation reversing circle homeomorphism has a fixed point, we infer that all the acting homeomorphisms of $\Phi$ are orientation preserving.
	Consequently, the group	$G_\Phi$ defined by~\eqref{Eq:Gphi} is a subgroup
	of the group $\Homeo_+(\mathbb S^1)$ formed by the orientation preserving circle homeomorphisms.
	Since the natural action of $G_\Phi$ on $\mathbb S^1$ is minimal,
	$G_\Phi$ is conjugate to a group of	rotations by \cite[Corollary~5.15]{Ghy}.
	Hence $\Phi$ is conjugate to an $S$-action on $\mathbb S^1$ by rotations.
\end{proof}

\begin{corollary}\label{C:main-circle}
The following statements hold:
\begin{enumerate}
\item every minimal action of an abelian semigroup $S$ on the circle $\mathbb S^1$ is conjugate to an $S$-action by rotations;
\item every abelian subgroup of $\Homeo(\mathbb S^1)$ with a minimal natural action on $\mathbb S^1$ is isomorphic to a dense subgroup of $\mathbb S^1$.
\end{enumerate}
\end{corollary}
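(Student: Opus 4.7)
The plan is to derive both statements from Theorem~B, which does essentially all the work. For statement~(1), I observe that the circle $\mathbb{S}^1$ is a compact connected Hausdorff space and that any open arc in $\mathbb{S}^1$ is a free interval; hence Theorem~B applies directly to any minimal action of an abelian semigroup $S$ on $\mathbb{S}^1$ and immediately yields a conjugacy to an $S$-action by rotations.

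For statement~(2), let $G$ be an abelian subgroup of $\Homeo(\mathbb{S}^1)$ whose natural action on $\mathbb{S}^1$ is minimal. Applying~(1) to this natural action, I obtain a homeomorphism $h\colon \mathbb{S}^1\to \mathbb{S}^1$ and, for each $g\in G$, a rotation $r_g$ of $\mathbb{S}^1$ such that $h\circ g = r_g\circ h$. The next step is to check that the assignment $\rho\colon g\mapsto r_g$ is a group homomorphism from $G$ into the rotation group of $\mathbb{S}^1$ (which we naturally identify with $\mathbb{S}^1$) and that it is injective. The homomorphism property follows from $r_{g_1 g_2}\circ h = h\circ g_1\circ g_2 = r_{g_1}\circ r_{g_2}\circ h$ together with the invertibility of $h$, while injectivity is immediate since $r_g=\Id$ combined with $h\circ g = h$ forces $g=\Id$. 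Hence $G$ is isomorphic, as an abstract group, to $H:=\rho(G)\leq \mathbb{S}^1$.

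It remains to verify that $H$ is dense in $\mathbb{S}^1$. For this I would point out that $h$ also conjugates the natural $G$-action on $\mathbb{S}^1$ to the natural rotation action of $H$ on $\mathbb{S}^1$; the former is minimal by hypothesis, so the latter is minimal as well. Since the orbits of the rotation action of a subgroup $H\leq \mathbb{S}^1$ are the cosets of $H$, minimality of this action is equivalent to density of $H$ in $\mathbb{S}^1$, completing the proof of~(2).

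I do not anticipate any real obstacle: Theorem~B supplies all of the dynamical content, and what remains is the routine verification that $\rho$ is a well-defined injective homomorphism together with the standard characterization of minimal rotation actions on the circle.
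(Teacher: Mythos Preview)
Your proposal is correct and follows essentially the same route as the paper: part~(1) is an immediate application of Theorem~B, and for part~(2) both you and the paper conjugate $G$ to a group of rotations, identify that group with a subgroup $H\leq\mathbb S^1$ via the obvious isomorphism, and then observe that minimality of the conjugated rotation action forces $H$ to be dense. Your write-up is a bit more explicit about checking that $g\mapsto r_g$ is an injective homomorphism, but the substance is identical.
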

\begin{proof}
Part (1) follows immediately from Theorem~B. To verify part (2), fix an abelian subgroup $G$ of $\Homeo(\mathbb S^1)$ with a minimal natural action on $\mathbb S^1$. By part (1), $G$ is conjugate in $\Homeo(\mathbb S^1)$ to a group of rotations $G'$. Clearly, the natural action of $G'$ on $\mathbb S^1$ is also minimal. Let $H$ consist of all $\alpha\in\mathbb S^1$ such that the rotation of $\mathbb S^1$ by $\alpha$ is an element of $G'$. Then $H$ is a subgroup of $\mathbb S^1$ isomorphic to $G'$ and hence also to $G$. Finally, since the action of $G'$ on $\mathbb S^1$ is minimal, $H$ must be dense in $\mathbb S^1$.
\end{proof}


\section{Existence of minimal actions of a given abelian semigroup}

Our aim in this section is to give a necessary and sufficient condition for a given abelian semigroup $S$ to act in a minimal way on a disjoint union $X$ of finitely many circles. Our first step towards this goal is to reduce the original problem to the description of abelian subgroups $G$ of $\Homeo(X)$ whose natural action on $X$ is minimal. This is done in the following proposition.

\begin{proposition}\label{P:ex.min.gp}
Let $X$ be a disjoint union of finitely many circles. Given an abelian semigroup $S$, the following conditions are equivalent:
\begin{enumerate}
\item the semigroup $S$ acts in a minimal way on the space $X$;
\item there is a morphism of semigroups $h\colon S\to \Homeo(X)$ such that
the (abelian) subgroup $G$ of $\Homeo(X)$ generated by the image $h(S)$ of $h$ has a minimal natural action on $X$.
\end{enumerate}
\end{proposition}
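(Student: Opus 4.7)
My plan is to handle the two directions separately. The forward implication $(1)\Rightarrow(2)$ should be essentially automatic, while the reverse $(2)\Rightarrow(1)$ reduces to a short semigroup-to-group lemma which I expect to be the heart of the argument.

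For $(1)\Rightarrow(2)$, I would simply set $h(s):=\varphi_s$. Theorem~A guarantees that each $\varphi_s$ is a homeomorphism of $X$, so $h\colon S\to\Homeo(X)$ is a well-defined semigroup morphism. The subgroup of $\Homeo(X)$ generated by $h(S)=S_\Phi$ is exactly the abelian group $G_\Phi$ from \eqref{Eq:Gphi}. Since $G_\Phi\supseteq S_\Phi$, every $G_\Phi$-orbit contains the corresponding $\Phi$-orbit and is therefore dense, so the natural action of $G_\Phi$ on $X$ is minimal.

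For $(2)\Rightarrow(1)$ I would define $\Phi$ by $\varphi_s:=h(s)$ and reduce minimality of $\Phi$ to the following general fact: if an abelian group $G$ acts minimally by homeomorphisms on a compact Hausdorff space $Y$, and $S'\subseteq G$ is a sub-semigroup generating $G$ as a group, then the natural action of $S'$ on $Y$ is also minimal. Applied with $Y=X$ and $S'=h(S)$, this yields the desired conclusion. Note that no special property of disjoint unions of circles is actually needed for this step; the circle structure enters only indirectly, via Theorem~A in the previous direction.

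To prove the general fact I would argue by Zorn's lemma. Given a nonempty closed $S'$-invariant set $M\subseteq Y$, compactness of $Y$ ensures that the family of nonempty closed $S'$-invariant subsets of $M$ has a minimal element $M_0$. The key observation is that for every $s\in S'$ the image $s(M_0)$ is again a nonempty closed $S'$-invariant subset of $M_0$: it is nonempty and closed because $s$ is a homeomorphism of $Y$, it is $S'$-invariant because commutativity of $G$ gives $t(s(M_0))=s(t(M_0))\subseteq s(M_0)$ for every $t\in S'$, and it is contained in $M_0$ by $S'$-invariance of $M_0$. Minimality of $M_0$ then forces $s(M_0)=M_0$, and since $s$ is a homeomorphism also $s^{-1}(M_0)=M_0$. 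Hence $M_0$ is invariant under the subgroup generated by $S'$, which is $G$, and minimality of the $G$-action gives $M_0=Y$ and thus $M=Y$. The step that I expect to need the most care is precisely the upgrade from $s(M_0)\subseteq M_0$ to $s(M_0)=M_0$, since it is there that both the minimality of $M_0$ and the commutativity of $G$ are genuinely used; everything else is routine bookkeeping.
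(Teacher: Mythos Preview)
Your proof is correct, and the direction $(1)\Rightarrow(2)$ is essentially identical to the paper's. For $(2)\Rightarrow(1)$, however, you take a genuinely different route. The paper argues directly with open covers: given a nonempty open $U\subseteq X$, minimality of the $G$-action together with compactness yields a finite cover $X=\bigcup_{i=1}^n\varphi_{t_i}^{-1}\varphi_{s_i}(U)$ with $t_i,s_i\in S$, and then the ``sum trick'' (pulling back by $\varphi_r$ with $r=\sum_j s_j$ and using $\varphi_r^{-1}\varphi_{s_i}=\varphi_{r_i}^{-1}$ for $r_i=\sum_{j\neq i}s_j$) converts this into a cover $X=\bigcup_i\varphi_{t_i+r_i}^{-1}(U)$ using only elements of $S$. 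Your argument is instead a minimal-set argument via Zorn's lemma: any $S'$-minimal set $M_0$ must satisfy $s(M_0)=M_0$ for each $s\in S'$ (by commutativity and minimality of $M_0$), hence is $G$-invariant and so equals $Y$. Both approaches exploit commutativity in an essential way; the paper's is more explicit and constructive (and reuses Lemma~\ref{L:dens.back.orb}), while yours is cleaner and isolates the abstract principle that a generating subsemigroup of an abelian group acting minimally by homeomorphisms on a compact Hausdorff space already acts minimally.
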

\begin{proof}
First notice that, in an arbitrary group, the subgroup generated by an abelian subsemigroup is automatically abelian. In particular, the commutativity assumption on $G$ in (2) is superfluous.
	
We show that (2) follows from (1). To this end, let $\Phi$ be a minimal action of $S$ on $X$. By virtue of Theorem~A, all the acting maps $\varphi_s$ of $\Phi$ are homeomorphisms. Let $G=G_{\Phi}$ be the subgroup of $\Homeo(X)$ generated by $\varphi_s$ ($s\in S$). Then the map $h\colon S\ni s\mapsto\varphi_s\in G$ is a morphism of semigroups and its image $h(S)$ generates the group $G$ by definition of $G$. Finally, minimality of the natural action of $G$ on $X$ is immediate by minimality of $\Phi$ and so condition (2) holds.

We show that (1) follows from (2). So assume that $G$ is an (abelian) subgroup of $\Homeo(X)$ with a minimal natural action on $X$ and $h\colon S\to G$ is a morphism of semigroups whose image $h(S)$ generates $G$. Then $G=\{h(t)^{-1}h(s) : t,s\in S\}$ by commutativity of $G$. Now the semigroup $S$ acts on the space $X$ via homeomorphisms $\varphi_s=h(s)$ ($s\in S$). We show that this action $\Phi$ of $S$ on $X$ is minimal. To this end, fix a nonempty open set $U\subseteq X$. Since the group $G$ acts on $X$ in a minimal way and the space $X$ is compact, there exist $n\geq2$ and $t_i,s_i\in S$ ($i=1,\dots,n$) with $X=\bigcup_{i=1}^n(\varphi_{t_i}^{-1}\varphi_{s_i})(U)$ (see Lemma~\ref{L:dens.back.orb}).
Set $r=\sum_{j=1}^ns_j$ and $r_i=\sum_{j\neq i}s_j$ for $i=1,\dots,n$. Then
\begin{equation*}
X=\varphi_r^{-1}(X)=\bigcup_{i=1}^n\varphi_{t_i}^{-1}\varphi_r^{-1}(\varphi_{s_i}(U))=\bigcup_{i=1}^n(\varphi_{t_i+r_i})^{-1}(U)=\bigcup_{s\in S}\varphi_s^{-1}(U)
\end{equation*}
and the minimality of $\Phi$ thus follows.
\end{proof}

Now, in view of Proposition~\ref{P:ex.min.gp}, it remains to find an algebraic characterization of those abelian subgroups of $\Homeo(X)$, whose natural action on $X$ is minimal. Such characterization is given in the following theorem. Before formulating it, let us recall some facts from the theory of abelian groups (the reader is referred to \cite[Chapter~IX]{Fuchs} for details).

Let
\begin{equation*}
0\longrightarrow H\longrightarrow G\longrightarrow K\longrightarrow 0
\end{equation*}
be a short exact sequence of abelian groups. Then there is a symmetric $H$-valued \emph{$2$-cocycle} $f$ over $K$ such that $G$ is isomorphic to $K\times_f H$. This means that $f\colon K\times K\to H$ is a function satisfying the identities
\begin{itemize}
\item $f(k_1,k_2)=f(k_2,k_1)$,
\item $f(k_1,k_2)+f(k_1+k_2,k_3)=f(k_2,k_3)+f(k_2+k_3,k_1)$,
\item $f(0,k)=f(k,0)=0$,
\end{itemize}
and $K\times_f H$ is an abelian group, whose elements are pairs $(k,h)$ with $k\in K$, $h\in H$ and whose operation is given by the rule
\begin{equation*}
(k_1,h_1)+(k_2,h_2)=(k_1+k_2,h_1+h_2+f(k_1,k_2)).
\end{equation*}
We shall now use these facts in the proof of the following theorem.

\begin{theoremC}
	\thmC
\end{theoremC}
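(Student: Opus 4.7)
The plan is to prove the two implications separately, using Corollary~\ref{C:main-circle}(2) as the bridge between minimal group actions on $\mathbb{S}^1$ and dense subgroups of $\mathbb{S}^1$.

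For (1)$\Rightarrow$(2), suppose $G\leq\Homeo(X)$ acts minimally on $X$. Because homeomorphisms permute the connected components of $X$, one gets an induced action of $G$ on the set of the $\ell$ circles forming $X$. Minimality forces this permutation action to be transitive, and since $G$ is abelian acting on a finite set, the action is in fact regular; hence its image $K$ satisfies $|K|=\ell$. Let $H$ be the kernel of $G\to K$; this gives the desired short exact sequence. To identify $H$ with a dense subgroup of $\mathbb{S}^1$, I would fix one component $C\cong\mathbb{S}^1$ and analyse the restriction map $H\to\Homeo(C)$. It is injective, since any $h\in H$ acting as the identity on $C$ has fixed points in $X$ and is therefore equal to $\Id_X$ by Lemma~\ref{L:facts H}. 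Moreover, its image acts minimally on $C$: because $C$ is clopen in $X$, for any $x\in C$ we have $\overline{Hx}=\overline{Gx\cap C}=\overline{Gx}\cap C=C$. Corollary~\ref{C:main-circle}(2) then identifies $H$ with a dense subgroup of $\mathbb{S}^1$.

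For (2)$\Rightarrow$(1), I would realize $X$ concretely as the disjoint union $K\times\mathbb{S}^1$ (with $K$ discrete) and build the action of $G$ out of two homomorphisms: the given quotient $\pi\colon G\to K$ and a homomorphism $\sigma\colon G\to\mathbb{S}^1$ extending the embedding $H\hookrightarrow\mathbb{S}^1$. The existence of $\sigma$ uses only that $\mathbb{S}^1$ is divisible, hence injective in the category of abelian groups. I would then set
\begin{equation*}
\Phi_g(k,x)=\bigl(\pi(g)+k,\ \sigma(g)+x\bigr),
\end{equation*}
which is a genuine $G$-action because both $\pi$ and $\sigma$ are homomorphisms. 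Faithfulness is immediate: if $\Phi_g=\Id$ then $\pi(g)=0$, so $g\in H$, and then $\sigma(g)=0$ forces $g=0$ because $\sigma|_H$ is the inclusion. For minimality, the $G$-orbit of $(k,x)$ meets every component (since $K$ acts transitively on itself by translation), and its trace on the component indexed by $k'$ is a coset of $\sigma(H)=H$ inside $\mathbb{S}^1$, which is dense by the density hypothesis on $H$.

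The main obstacle I anticipate lies in direction (2)$\Rightarrow$(1): the paper has primed the reader with the cocycle presentation $G\cong K\times_f H$, so one naturally expects an explicit cocycle-based formula, which becomes bookkeeping-heavy. The injective-extension argument above sidesteps the cocycle arithmetic entirely; alternatively, one may write $G=K\times_f H$ and verify the same formula directly, in which case the $2$-cocycle identities reduce precisely to the statement that $\sigma$ is a group homomorphism, so the two approaches carry exactly the same content.
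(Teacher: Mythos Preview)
Your argument for (1)$\Rightarrow$(2) is essentially the paper's own proof: the stabilizer of a chosen component coincides (by commutativity) with the kernel of the component-permutation map, its restricted action on that component is minimal and effective, and Corollary~\ref{C:main-circle}(2) finishes the identification. Your regularity remark is a tidy way to phrase the orbit--stabilizer count that the paper does explicitly.

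For (2)$\Rightarrow$(1) you take a genuinely different route from the paper. The paper works with the cocycle presentation $G\cong K\times_f H$ and defines
\[
\varphi_{(k,h)}(a,b)=(k+a,\ b+h+f(k,a)),
\]
so the rotation amount in the circle coordinate depends on the component $a$ via the cocycle. By contrast, you exploit the divisibility (hence injectivity) of $\mathbb{S}^1$ to extend $H\hookrightarrow\mathbb{S}^1$ to a homomorphism $\sigma\colon G\to\mathbb{S}^1$ and then act by pure translations via $(\pi,\sigma)\colon G\hookrightarrow K\times\mathbb{S}^1$. This is correct and cleaner: it shows that $G$ always embeds as a subgroup of $K\times\mathbb{S}^1$ acting by translations, with dense projection to $\mathbb{S}^1$. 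The paper's cocycle formula gives, in general, a \emph{different} faithful minimal action (one that does not factor through translations when $f$ is not a coboundary in $H$); your final comment that the two approaches ``carry exactly the same content'' slightly overstates the match---what the injectivity argument really says is that the $H$-valued cocycle becomes a coboundary once pushed into $\mathbb{S}^1$, which is why your translation action exists. Either construction suffices for the theorem, but yours avoids the cocycle bookkeeping at the cost of invoking the injectivity of $\mathbb{S}^1$ as a black box.
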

\begin{remark}
Condition (2) can be restated by saying that $G$ is an extension of a group with $\ell$ elements by a dense subgroup of $\mathbb S^1$. Let us also recall that a subgroup $H$ of $\mathbb S^1$ is dense if and only if it is infinite.
\end{remark}

\begin{proof}
First we verify implication (1)$\Rightarrow$(2). So assume that $G$ is a subgroup of $\Homeo(X)$ with a minimal natural action on $X$. Write $\mathbb S^1$ for a chosen component of $X$ and denote by $H$ the stabilizer of $\mathbb S^1$:
\begin{equation*}
H=\{\varphi\in G : \varphi(\mathbb S^1)=\mathbb S^1\}=\{\varphi\in G : \varphi(\mathbb S^1)\cap\mathbb S^1\neq\emptyset\}.
\end{equation*}
Clearly, $H$ is a subgroup of $G$ and the minimal action of $G$ on $X$ restricts to a minimal action of $H$ on $\mathbb S^1$. Moreover, since $G$ acts on $X$ freely by Lemma~\ref{L:facts H}, the restricted action of $H$ on $\mathbb S^1$ is effective and so $H$ can be identified with a subgroup of $\Homeo(\mathbb S^1)$ with a minimal natural action on $\mathbb S^1$. Thus, $H$ is isomorphic to a dense subgroup of $\mathbb S^1$ by Corollary~\ref{C:main-circle}. Let $K=G/H$ be the quotient group of $G$ by $H$. By definition of $H$, the elements of $K$ are in a one-to-one correspondence with the components of $X$ via the map $G/H\ni\varphi+H\mapsto\varphi(\mathbb S^1)\subseteq X$ for $\varphi\in G$. It follows that $K$ has cardinality $\ell$ and so the canonical short exact sequence
\begin{equation*}
0\longrightarrow H\longrightarrow G\longrightarrow G/H\longrightarrow 0
\end{equation*}
satisfies all the requirements from (2).

We verify implication (2)$\Rightarrow$(1). So let $G$ be an abelian group, $H$ be a dense subgroup of $\mathbb S^1$, $K$ be an abelian group with cardinality $\ell$ and assume that these groups fit into a short exact sequence
\begin{equation*}
0\longrightarrow H\longrightarrow G\longrightarrow K\longrightarrow 0.
\end{equation*}
Without loss of generality, we may assume that $X=K\times\mathbb S^1$ and $G=K\times_fH$, where $f\colon K\times K\to H$ is a symmetric $H$-valued $2-$cocycle over $K$. We define an action of $G$ on $X$ as follows: given $g\in G$, $g=(k,h)$, we define a map
\begin{equation*}
\varphi_g=\varphi_{(k,h)}\colon K\times\mathbb S^1\ni(a,b)\mapsto(k+a,b+h+f(k,a))\in K\times\mathbb S^1.
\end{equation*}
Clearly, all the maps $\varphi_g$ are continuous. Moreover, since $f$ is a symmetric $2$-cocycle over $K$, it follows that
\begin{itemize}
\item $\varphi_0$ is the identity on $K\times\mathbb S^1$,
\item $\varphi_{g'}\varphi_g=\varphi_{g'+g}$ for all $g,g'\in G$.
\end{itemize}
Thus, the maps $\varphi_g$ ($g\in G$) constitute an action of $G$ on $X$ by homeomorphisms. Since $\varphi_g$ possesses a fixed point only for $g=0$, the action of $G$ on $X$ is effective and hence $G$ may be identified with a subgroup of $\Homeo(X)$.

To finish the proof, we need to show that the action of $G$ on $X$ is minimal. This follows immediately from the following observations.
\begin{itemize}
\item The maps $\varphi_{(k,0)}$ ($k\in K$) permute the components of $X$ transitively. That is, given components $C,C'$ of $X$, there is $k\in K$ with $\varphi_{(k,0)}(C)=C'$.
\item The maps $\varphi_{(0,h)}$ ($h\in H$) form the stabilizer of $\mathbb S^1=0\times\mathbb S^1$ and they act on $\mathbb S^1$ via rotations by elements of $H$. Since $H$ is a dense subgroup of $\mathbb S^1$, the family of maps $\varphi_{(0,h)}$ ($h\in H$) acts on $\mathbb S^1$ in a minimal way.
\end{itemize}
\end{proof}

In Theorem~C we gave a necessary and sufficient condition for a given abelian group $G$ to act in a minimal and effective way on a disjoint union $X$ of $\ell$ circles. The following corollary of Theorem~C gives an analogous necessary and sufficient condition for the existence of a minimal (not necessarily effective) action of $G$ on $X$. It is based on a standard procedure of turning an action into an effective one; we present a detailed proof for completeness.

\begin{corollary}\label{C:min.act.not.eff}
Let $\ell$ be a positive integer and $X$ be a disjoint union of $\ell$ circles. Given an abelian group $G$, the following conditions are equivalent:
\begin{enumerate}
\item $G$ acts in a minimal way on $X$;
\item $G$ possesses a quotient group, which is an extension of a group with $\ell$ elements by a dense subgroup of $\mathbb S^1$.
\end{enumerate}
\end{corollary}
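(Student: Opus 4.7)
The plan is to reduce Corollary~\ref{C:min.act.not.eff} to Theorem~C by passing to the quotient of $G$ by the kernel of the action, in the forward direction, and by lifting through the quotient map in the backward direction. Throughout, I would use the fact that for a group action all acting maps are homeomorphisms of $X$ by convention from Subsection~\ref{SS:Defs}, so no appeal to Theorem~A is needed.

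For the implication (1)$\Rightarrow$(2), I would start with a minimal action $\Phi$ of $G$ on $X$ and consider the set
\begin{equation*}
	N=\{g\in G:\varphi_g=\Id_X\}.
\end{equation*}
Since $\Phi$ corresponds to a group homomorphism $\rho\colon G\to\Homeo(X)$, $N$ is exactly the kernel of $\rho$ and hence a subgroup of $G$. The homomorphism $\rho$ factors through the quotient $G/N$, yielding an injective homomorphism $\bar\rho\colon G/N\hookrightarrow\Homeo(X)$ whose image acts on $X$ with the same orbits as $G$. Consequently, the natural action of the subgroup $\bar\rho(G/N)$ of $\Homeo(X)$ on $X$ is minimal, so by the implication (1)$\Rightarrow$(2) of Theorem~C, $G/N$ fits into a short exact sequence $0\to H\to G/N\to K\to 0$ with $H$ a dense subgroup of $\mathbb{S}^1$ and $|K|=\ell$; this is precisely the extension condition in (2).

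For the converse (2)$\Rightarrow$(1), assume that $N$ is a subgroup of $G$ for which $G/N$ is an extension of a group with $\ell$ elements by a dense subgroup of $\mathbb{S}^1$. By the implication (2)$\Rightarrow$(1) of Theorem~C applied to $G/N$, there is an embedding $\iota\colon G/N\hookrightarrow\Homeo(X)$ whose image acts minimally on $X$. Composing $\iota$ with the canonical projection $\pi\colon G\to G/N$ gives a homomorphism $h=\iota\circ\pi\colon G\to\Homeo(X)$, and setting $\varphi_g=h(g)$ defines an action $\Phi$ of $G$ on $X$. Because $h(G)=\iota(G/N)$ as sets of homeomorphisms, the $G$-orbits coincide with the $\iota(G/N)$-orbits and are therefore dense in $X$, so $\Phi$ is minimal.

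I do not expect a genuine obstacle here: once Theorem~C is available, the content of the corollary is the standard bookkeeping that any action factors through an effective action of a quotient and that this operation preserves minimality while replacing the acting group by a quotient. The only points requiring care are verifying that $N$ is the kernel of a homomorphism (so that $G/N$ is well-defined as a group) and that minimality is unchanged under collapsing kernel elements or lifting through the quotient map; both reduce to the observation that the set of acting homeomorphisms, and thus the orbit partition of $X$, is the same for $\Phi$ and for the induced action of $G/N$.
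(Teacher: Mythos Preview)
Your proposal is correct and follows essentially the same argument as the paper: in both directions one passes between $G$ and the quotient $G/N$ by the kernel of the associated homomorphism into $\Homeo(X)$, observes that the orbit partition and hence minimality are unchanged, and then invokes Theorem~C for the effective quotient action. The only cosmetic difference is notation ($N$ versus $G'$, $\rho$ versus $h$), and your explicit remark that group actions are by homeomorphisms by convention is a helpful clarification that the paper leaves implicit.
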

\begin{proof}
We begin by showing that (2) follows from (1). To this end, fix a minimal action $\Phi$ of $G$ on $X$. Denote by $h\colon G\to\Homeo(X)$ the morphism of groups induced by $\Phi$ and write $G'=\ker(h)$ for its kernel. Then $h$ factors through the canonical quotient morphism $p\colon G\to G/G'$ to a monomorphism of groups $h'\colon G/G'\to\Homeo(X)$, that is, $h=h'p$. The action $\Phi'$ of $G/G'$ on $X$ corresponding to $h'$ is then effective. Moreover, $\Phi$ and $\Phi'$ have the same set of orbits and, since $\Phi$ is minimal, so is $\Phi'$. Thus, the group $G/G'$ acts in a minimal and effective way on $X$ and hence, by virtue of Theorem~C, $G/G'$ is an extension of a group with $\ell$ elements by a dense subgroup of $\mathbb S^1$.

To see that (1) follows from (2), let $G'$ be a subgroup of $G$ such that the corresponding quotient group $G/G'$ is an extension of a group with $\ell$ elements by a dense subgroup of $\mathbb S^1$. Then, by virtue of Theorem~C, $G/G'$ acts on $X$ in a minimal and effective way. Fix such a minimal action $\Phi'$ of $G/G'$ on $X$ and denote by $h'\colon G/G'\to\Homeo(X)$ the morphism of groups induced by $\Phi'$. Consider the canonical quotient morphism $p\colon G\to G/G'$ and set $h=h'p$. Then $h\colon G\to\Homeo(X)$ is a morphism of groups. Denote by $\Phi$ the action of $G$ on $X$ corresponding to $h$. Then, similarly as above, the two actions $\Phi$ and $\Phi'$ have the same set of orbits and, since $\Phi'$ is minimal, so is $\Phi$. This verifies condition (1) and finishes the proof.
\end{proof}

Let us now illustrate how Theorem~C and Corollary~\ref{C:min.act.not.eff} can be used to detect the (non-)existence of minimal actions in concrete situations.

\begin{example}
The group $G=\mathbb Z$ acts in a minimal way on a disjoint union of $\ell$ circles for every $\ell\in\mathbb N$. Indeed, given $\ell\in\mathbb N$, the subgroup $H=\ell\mathbb Z$ of $\mathbb Z$ is isomorphic to a dense subgroup of $\mathbb S^1$ and the corresponding quotient group $G/H=\mathbb Z_\ell$ has cardinality $\ell$.
\end{example}

\begin{example}
We claim that the torsion subgroup $G=\tor(\mathbb S^1)$ of $\mathbb S^1$, consisting of the elements of $\mathbb S^1$ with a finite order, acts in a minimal way on a disjoint union of $\ell$ circles if and only if $\ell=1$. The ``if'' part is clear, since $\tor(\mathbb S^1)$ is dense in $\mathbb S^1$. To verify the ``only if'' part, fix $\ell\in\mathbb N$ and assume that $G$ acts in a minimal way on a disjoint union of $\ell$ circles. By virtue of Corollary~\ref{C:min.act.not.eff}, $G$ factors onto a group $K$ with cardinality $\ell$. Since $G$ is divisible, it follows that so is $K$. However, the only finite divisible abelian group is the trivial one, hence $\ell=1$.
\end{example}

\begin{example}
Let $(p_n)_{n\in\mathbb N}$ be an increasing sequence of prime numbers and $G$ be the direct sum $G=\bigoplus_{n\in\mathbb N}\mathbb Z_{p_n}$. We claim that $G$ acts in a minimal way on a disjoint union $X$ of $\ell$ circles if and only if $\ell$ is expressible as a product $\ell=\prod_{n\in F}p_n$ for a finite subset $F$ of $\mathbb N$ (the empty product being interpreted as $1$).

To verify the ``if'' part, let $F\subseteq\mathbb N$ be finite. Then $G$ splits into a direct sum $G=K\oplus H$, where $K=\bigoplus_{n\in F}\mathbb Z_{p_n}$ and $H=\bigoplus_{n\in\mathbb N\setminus F}\mathbb Z_{p_n}$. The group $H$ is infinite and is clearly isomorphic to a subgroup of $\tor(\mathbb S^1)$, hence also of $\mathbb S^1$. Moreover, the cardinality of $K$ is $\prod_{n\in F}p_n=\ell$. Finally, since $K$ is isomorphic to $G/H$, the group $G$ acts in a minimal (and effective) way on $X$ by Theorem~C.

To verify the ``only if'' part, fix $\ell\in\mathbb N$ and let $G$ act in a minimal way on $X$. Then Corollary~\ref{C:min.act.not.eff} yields a quotient group $K$ of $G$ with cardinality $\ell$. Let $q\colon G\to K$ be the underlying quotient morphism. Given $n\in\mathbb N$, we have either $\mathbb Z_{p_n}\subseteq\ker(q)$ or $\mathbb Z_{p_n}\cap\ker(q)=0$. Set $F=\{n\in\mathbb N : \mathbb Z_{p_n}\cap\ker(q)=0\}$. Since $q$ is monic on $\mathbb Z_{p_n}$ for every $n\in F$ and the prime numbers $p_n$ ($n\in F$) are mutually distinct, it follows that $q$ is monic also on $\bigoplus_{n\in F}\mathbb Z_{p_n}$. Also, $q$ vanishes on $\bigoplus_{n\in\mathbb N\setminus F}\mathbb Z_{p_n}$. Consequently, $q\colon \bigoplus_{n\in F}\mathbb Z_{p_n}\to K$ is an isomorphism and the cardinality of $K$ thus equals $\ell=\prod_{n\in F}p_n$.
\end{example}

\section{Minimal sets on compact spaces with a free interval}\label{S:min.set.free.int}

Our aim in this section is to prove the following trichotomy for minimal sets of abelian semigroup actions, which intersect a free interval.

\begin{theoremD}
	\thmD
\end{theoremD}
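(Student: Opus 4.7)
The plan is to restrict $\Phi$ to the minimal set $M$, yielding a minimal action of $S$ on the compact Hausdorff space $M$, and to perform a case analysis based on the local structure of $M$. First, Lemma~\ref{L:min.set.free.int}(2) applied to $\Phi|_M$ shows that if $M$ has an isolated point then $M$ is finite, giving case~(1). Otherwise $M$ is perfect. If in addition $M$ contains a free interval of itself---which certainly occurs as soon as $E:=M\cap J$ has nonempty interior in $J$, since such an interior is automatically open in $X$ (hence in $M$)---then Theorem~A applied to $\Phi|_M$ forces $M$ to be a disjoint union of finitely many circles, giving case~(2).

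It then remains to treat the situation where $M$ is infinite, perfect, and contains no free interval of itself; in this case $E$ is a nonempty perfect subset of $J$ with empty interior in $J$. Applying Lemma~\ref{L:dens.back.orb} to $\Phi|_M$ with the relatively open set $E$ yields a finite $F\subseteq S$ with $M=\bigcup_{r\in F}\varphi_r(E)$. I set $A:=\overline{\conv(E)}$ and $Z:=\bigcup_{r\in F}\varphi_r(A)$; provided $A$ turns out to be an arc or a circle, each $\varphi_r(A)$ is a continuous Hausdorff image of a Peano continuum and hence itself a Peano continuum (Hahn--Mazurkiewicz). Then $Z$ is a finite union of Peano continua in the Hausdorff space $X$, hence closed, second countable (so metrizable), and locally connected; since $M\subseteq Z$ by the covering, this supplies the first part of the conclusion.

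The main obstacle is proving that $A$ is an arc or a circle, as a priori $A$ is only the closure in an arbitrary compact Hausdorff $X$ of an interval of $J$ whose ends might accumulate to complicated continua. The plan is to adapt Lemma~\ref{L:homeo-J}: whenever $a,b\in E$ satisfy $a<b$, $E\cap(a,b)\neq\emptyset$, and $\varphi_s(a)=\varphi_s(b)$, the density of the $\Phi$-orbit of $\varphi_s(a)$ in $M$ produces $c=\varphi_t(a)=\varphi_t(b)\in(a,b)\cap E$ (with $t=r+s$ for suitable $r$), to which Lemmas~\ref{L:arc-orb} and~\ref{L:recper2} apply to yield a contradiction. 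Combining this injectivity with the covering $M=\bigcup_{r\in F}\varphi_r(E)$ and the no-free-interval assumption---invoking \cite[Theorem~20]{DSS} to transfer recurrence between $E$ and its orbit, in the spirit of the closing paragraphs of the proof of Theorem~A---I would rule out nondegenerate limit continua at the ends of $J$ that $\conv(E)$ reaches; any such continuum, after application of some $\varphi_r$, would force $M$ to contain a free interval. Hence each such end collapses to a single point of $X$, and $A$ is either an arc (when $\conv(E)$ is a proper subinterval of $J$, or when $\conv(E)=J$ but the two ends of $J$ are distinct) or a circle (when $\conv(E)=J$ and the two ends of $J$ coincide in $X$).

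Once $A$ is identified, the rest follows routinely. The set $M\cap A$ is closed in the metric continuum $A$, contains the perfect $E$ densely (as $\conv(E)$ is dense in $A$), has no isolated points (since $M$ is perfect), and has empty interior in $A$ (since $E$ has empty interior in $J$ and $A\setminus J$ consists of at most two boundary points of $A$); hence $M\cap A$ is a Cantor set. Nowhere density of $M$ in $X$ follows from the no-free-interval assumption together with the $\varphi_s$-injectivity obtained above and Lemma~\ref{L:min.set.free.int}(1): a nonempty open subset of $X$ lying in $M$ would, after being mapped by a suitable $\varphi_r$ into $J$, either inject into $E$ and produce nonempty interior of $E$ in $J$, collapse to a point and force $M$ finite, or give rise to a nondegenerate limit continuum at an end of $J$---all three excluded. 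Mutual exclusivity of the three cases is then immediate from cardinality and connectedness considerations.
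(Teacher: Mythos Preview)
Your case split (isolated point $\Rightarrow$ finite; $E=M\cap J$ has interior in $J$ $\Rightarrow$ circles; otherwise case~(3)) is fine, and your appeal to Theorem~A for case~(2) is correct. The genuine gap is in case~(3), in two linked places.

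First, a chicken-and-egg problem. You build the locally connected subspace $Z=\bigcup_{r\in F}\varphi_r(A)$ only \emph{after} knowing that $A$ is an arc or a circle. But the paper uses the locally connected ambient space precisely as the tool for proving that $A$ is an arc or a circle. The fix is trivial: replace $A$ by any arc $B\subseteq J$ whose interior meets $M$. Then $Y=\bigcup_{r\in F}\varphi_r(B)$ is a finite union of Peano continua (each $\varphi_r(B)$ is a Hausdorff continuous image of $[0,1]$), hence closed, metrizable and locally connected, and $M\subseteq Y$ by Lemma~\ref{L:dens.back.orb}. This gives the first assertion of the theorem at once and, more importantly, gives you local connectedness to work with.

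Second, your proposed route to ``$A$ is an arc or a circle'' via an adaptation of Lemma~\ref{L:homeo-J} does not go through as stated. That lemma and its supporting Lemmas~\ref{L:arc-orb} and~\ref{L:recper2} need the map $\varphi_t$ to have \emph{no periodic points in the free arc} $[a,b]\subseteq J$. In the setting of Theorem~D only the restricted action on $M$ is minimal, so Lemma~\ref{L:facts H} gives freeness only on $M$; nothing prevents $\varphi_t$ from having fixed or periodic points in $[a,b]\setminus M$, and then the horseshoe-type arguments behind Lemmas~\ref{L:arc-orb}, \ref{L:recper2} and \ref{DSS-A.1} simply do not apply. Even if this obstacle were removed, it is unclear how injectivity of $\varphi_s$ on $E$ would control limit points of $\conv(E)$ outside $J$, since those limit points need not lie in $M$ and are not moved by $\varphi_r$ in any way that forces a free interval in $M$. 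The paper's argument is much more direct: with $Y$ in hand, define $M^{\pm}$ as the set of limits in $X\setminus L$ (where $L=\conv(M\cap J)$) of monotone sequences from $M\cap L$, and use local connectedness of $Y$ together with the Hausdorff property to show each of $M^{\pm}$ is a singleton; then $\overline{L}=L\cup M^+\cup M^-$ is a one- or two-point compactification of an interval, hence an arc or a circle. Your final paragraph on nowhere density then also becomes unnecessary: once you organize the case split as ``$M$ has nonempty interior in $X$'' versus ``$M$ is nowhere dense'', the local connectedness of $Y$ lets you find a \emph{connected} open $V\subseteq M$, push it into $J$ by some $\varphi_t$, and conclude case~(1) or~(2) directly.
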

\begin{proof}
	Fix an arc $B\subseteq J$, whose interior in $X$ intersects $M$. By Lemma~\ref{L:dens.back.orb} applied to the restricted action of $S$ on $M$, there is a finite set $F\subseteq S$ with $M\subseteq\bigcup_{r\in F}\varphi_r(B)$. Being a finite union of Peano continua, $Y=\bigcup_{r\in F}\varphi_r(B)$ is a desired closed metrizable locally connected subspace of $X$ containing $M$.

	Assume that $M$ has a nonempty interior in $X$. As $M\subseteq Y$ and $Y$ is locally connected, $M$ contains a nonempty connected open subset $V$ of $X$. Further, since $M$ is a minimal set for $\Phi$ and it intersects $J$, there is $t\in S$ with $\varphi_t(V)\cap J\neq\emptyset$. By connectedness of $V$, there are two possibilities.
	\begin{enumerate}
		\item[(a)] The set $\varphi_t(V)\cap J$ contains an arc. Then the set $M\supseteq\varphi_t(V)\cap J$ contains a free interval and so it is a disjoint union of finitely many circles by Theorem~A. Thus, in this case, condition (2) holds.
		\item[(b)] The set $\varphi_t(V)$ is a singleton. Then the set $M$ is finite by virtue of Lemma~\ref{L:min.set.free.int}. Thus, in this case, condition (1) holds.
	\end{enumerate}

	Now assume that the set $M$ is infinite and nowhere dense in $X$ and write $J=(0,1)$. By the first step of the proof, $M$ is contained in the closed metrizable locally connected subspace $Y$ of $X$.
	Since the set $M$ has no isolated point by Lemma~\ref{L:min.set.free.int},
	$\conv(M\cap J)$ is a non-degenerate interval with end points $0\le a<b\le 1$; put $L=(a,b)$.
	Denote by $M^+$ (respectively, $M^-$) the set of all $x\in X\setminus L$ such that there is an increasing (respectively, decreasing) sequence $(x_n)_{n\in\mathbb N}$ in $M\cap L$ with $x_n\to x$.
	Since $M$ is compact metrizable, both $M^+$ and $M^-$ are non-empty.
	We show, in fact, that each of them is a singleton. We verify this for $M^+$, the argument for $M^-$ being similar. So let $x,y\in M^+$. Fix an increasing sequence $(x_n)_{n\in\mathbb N}$ in $M\cap L$ with $x_n\to x$. Given a connected neighbourhood $W$ of $x$ in $Y$, we have $x_n\in W$ for all but finitely many $n$ and so $W\supseteq(b-\varepsilon,b)$ for some $\varepsilon>0$ by connectedness of $W$. Since $Y$ is locally connected, each neighbourhood of $x$ in $Y$ contains a subset of $L$ of the form $(b-\varepsilon,b)$ with $\varepsilon>0$. However, the same argument applies to $y$ and so $x,y$ can not be separated by disjoint open sets in $Y$. In view of the Hausdorff property of $Y$, this means that $x=y$.

Write $A=\overline{\conv(M\cap J)}$; obviously, $A=\overline{L}$.
We claim that
\begin{equation*}
A=L\cup M^+\cup M^-.
\end{equation*}
The inclusion ``$\supseteq$'' is clear. To verify the converse inclusion, fix $x\in A\setminus L$. Then each neighbourhood of $x$ in $X$ intersects
$L\setminus[a+\varepsilon,b-\varepsilon]\subseteq(a,a+\varepsilon)\cup(b-\varepsilon,b)$
for every sufficiently small $\varepsilon>0$. Moreover, as observed above, each neighbourhood of $M^+\cup M^-$ in $X$ contains $(a,a+\varepsilon)\cup(b-\varepsilon,b)$ for some $\varepsilon>0$. Due to the finiteness of $M^+\cup M^-$ and the Hausdorff property of $X$, this means that $x\in M^+\cup M^-$.

Thus, $A$ is a (Hausdorff) compactification of an interval by at most two points, whence it follows that $A$ is either an arc or a circle. Further, since
$L$ is a free interval in $X$ and
$M$ is nowhere dense in $X$ by the assumption, the set $M\cap L$ is totally disconnected and hence so is $M\cap A=(M\cap L)\cup M^+\cup M^-$. Moreover, the compact metrizable set $M\cap A$ has no isolated points by Lemma~\ref{L:min.set.free.int}. Thus, $M\cap A$ is a Cantor set.
	
	Finally, since all the conditions (1)--(3) are clearly mutually exclusive, the proof of the theorem is finished.
\end{proof}

In the special situation when $X$ is a compact metric space and $S=\mathbb N$, we know from \cite[Theorem B]{DSS} that in case (3) the set $M$ is a cantoroid (that is, a compact metric space without isolated points, whose degenerate components form a dense set). For a general abelian semigroup $S$ we do not know whether this is also the case. However, if all the acting maps are homeomorphisms, even the following stronger result is true.

\begin{corollary}\label{C:min.set.free.int}
	Let $X$ be a compact Hausdorff space with a free interval $J$ and let $\Phi$ be an action of an abelian semigroup $S$ on $X$. Assume that $M\subseteq X$ is a minimal set for $\Phi$, which intersects~$J$. If all the acting maps $\varphi_s$ of $\Phi$ are homeomorphisms of $X$ then exactly one of the following conditions holds:
	\begin{enumerate}
		\item $M$ is finite;
		\item $M$ is open in $X$ and it is a disjoint union of finitely many circles;
		\item $M$ is a nowhere dense Cantor set in $X$.
	\end{enumerate}
\end{corollary}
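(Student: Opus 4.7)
I will derive the corollary from Theorem~D by analyzing each of its three cases under the added hypothesis that every $\varphi_s$ is a homeomorphism of $X$. The hypothesis yields at once that $\varphi_s(M)=M$ for every $s\in S$ (since $\varphi_s(M)$ is a nonempty closed $\Phi$-invariant subset of the minimal set $M$), so each $\varphi_s$ restricts to a homeomorphism of $M$ onto itself and acts as an open map on $X$. Both facts will be used repeatedly below.

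Case (1) of Theorem~D directly gives condition (1) of the corollary. For case (2), the proof of Theorem~D shows that $M$ already contains a free interval of $X$, so $V:=\Int(M)$ is nonempty. Using that each $\varphi_s$ is an open map with $\varphi_s(M)=M$, I obtain $\varphi_s(V)\subseteq V$, so $V$ is a nonempty open $\Phi|_M$-invariant subset of $M$. Applying Lemma~\ref{L:dens.back.orb} to the minimal system $(M,\Phi|_M)$ then produces a finite $F\subseteq S$ with $M=\bigcup_{r\in F}\varphi_r(V)$; since each $\varphi_r(V)$ is open in $X$, it follows that $M$ itself is open in $X$, as required in condition (2).

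In case (3) the task is to strengthen ``$M\cap A$ is a Cantor set'' to ``$M$ is a Cantor set''. Compact metrizability of $M$ is already granted by Theorem~D, which places $M$ inside a closed metrizable subspace $Y$ of $X$, and the absence of isolated points follows from Lemma~\ref{L:min.set.free.int}(2) applied to $(M,\Phi|_M)$, since $M$ is infinite. The remaining and principal step is total disconnectedness, which I will establish by showing that $M$ admits a clopen neighborhood basis at every point. For a point $y\in M\cap L$, where $L\subseteq A\cap J$ is the open interval appearing in the proof of Theorem~D, I take any neighborhood $W$ of $y$ in $M$ and, using that $M\cap A$ is a Cantor set with $L$ open in $A$, produce a set $U$ which is a clopen neighborhood of $y$ in $M\cap A$ contained in $W\cap L$. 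Because $L$ is open in $X$ (hence $M\cap L$ is open in $M$), the set $U$ is open in $M$; because $U$ is compact and $M$ is Hausdorff, $U$ is also closed in $M$, thus clopen in $M$. For an arbitrary $m\in M$, Lemma~\ref{L:dens.back.orb} supplies some $t\in S$ with $\varphi_t(m)\in M\cap L$, and the clopen basis at $\varphi_t(m)$ pulls back via the homeomorphism $\varphi_t|_M$ to a clopen basis at $m$. Consequently, every component of $M$ reduces to a single point and $M$ is a Cantor set.

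The three resulting conditions are pairwise incompatible: (1) is finite while (2) and (3) are infinite, and (2) forces $M$ to have nonempty interior in $X$ while (3) demands nowhere-density. So exactly one of them holds, as asserted. I expect the delicate point to be the clopen-basis step in case (3): one has to ensure that clopen neighborhoods chosen inside the Cantor set $M\cap A$ remain clopen inside the larger and a priori wilder set $M$, and this relies essentially on the fact that the interval $L$ is simultaneously open in $A$ and open in $X$.
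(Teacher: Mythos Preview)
Your argument is correct and follows the same overall strategy as the paper: in both cases (2) and (3) you invoke Lemma~\ref{L:dens.back.orb} on the minimal system $(M,\Phi|_M)$ to cover $M$ by $\varphi_r$-images of a well-understood open piece of $M$, exploiting that the $\varphi_r$ are homeomorphisms of $X$. For case (2) your argument and the paper's are virtually identical (the paper uses $J$ in place of $\Int(M)$, after observing $J\subseteq M$). For case (3) the paper takes a shorter route: it fixes a Cantor set $C\subseteq M\cap J$ that is open in $M$, writes $M=\bigcup_{r\in F}\varphi_r(C)$ for some finite $F\subseteq S$ via Lemma~\ref{L:dens.back.orb}, and concludes that $M$ is a finite union of Cantor sets and hence itself a Cantor set. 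This packages compactness, metrizability, perfectness and total disconnectedness into a single step, whereas you verify them separately; both routes are valid, but the paper's saves the clopen-basis bookkeeping you flagged as the delicate point.
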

\begin{proof}
	First we show that, in case (3) of Theorem~D, $M$ is a Cantor set. To this end, fix a Cantor set $C\subseteq M\cap J$ open in $M$. By Lemma~\ref{L:dens.back.orb} applied to the restricted action of $S$ on $M$, there is a finite set $F\subseteq S$ with $M=\bigcup_{r\in F}\varphi_r(C)$. Since all $\varphi_r$ are homeomorphisms of $X$, it follows that $M$ is a union of finitely many Cantor sets and hence $M$ itself is a Cantor set.
	
	Second we show that, in case (2) of Theorem~D, $M$ is open in $X$. To this end, notice that $J\subseteq M$. By Lemma~\ref{L:dens.back.orb}, there is a finite set $F\subseteq S$ with $M=\bigcup_{r\in F}\varphi_r(J)$. Since all $\varphi_r$ are homeomorphisms of $X$, it follows that $M$ is a union of open subsets of $X$ and hence $M$ itself is open in $X$. This finishes the proof.
\end{proof}

\end{large}

\begin{thebibliography}{9999}


\bibitem{Bek04} L.~A.~Beklaryan,
\emph{Groups of homeomorphisms of the line and the circle, Topological characteristics
and metric invariants},
Russian Math. Surveys 59, (2004), no. 4, 599--660.


\bibitem{DSS} M.~Dirb\'ak, \mL{}.~Snoha, V. \v Spitalsk\'y,
             \emph{Minimality, transitivity, mixing and topological entropy on spaces with a free interval}, Ergodic Theory Dynam.~Systems \textbf{33}~(2013), no.~6, 1786--1812.

\bibitem{Fuchs} L. Fuchs,
                \emph{Infinite abelian groups. Vol. I. Pure and Applied Mathematics, Vol. 36},
                 Academic Press, New York-London, 1970.

\bibitem{Gla} E.~Glasner,
                \emph{Short proofs of theorems of Malyutin and Margulis},
                Proc.~Amer.~Math.~Soc. \textbf{145}~(2017), no. 12, 5463--5467.

\bibitem{Ghy} \'E.~Ghys,
              \emph{Groups acting on the circle},
              Enseign.~Math.~\textbf{47}~(2001), no.~3/4, 329--407.

\bibitem{KST} S.~Kolyada, \mL{}.~Snoha, S.~Trofimchuk,
              \emph{Noninvertible minimal maps},
              Fund. Math.~\textbf{168}~(2001), no.~2, 141--163.

\bibitem{Mar} G. A. Margulis,
            \emph{Free subgroups of the homeomorphism group of the circle},
            C.~R.~Acad.~Sci.~Paris S\`er.~I Math. \textbf{331}~(2000), no.~9, 669–-674.

\bibitem{Mal} A. Malyutin,
  \emph{Classification of the group actions on the real line and circle},
  St. Petersburg Mathematical Journal~\textbf{19}~(2008), no.~2, 279--296.

\bibitem{Nav} A. Navas,
			\emph{Groups of circle diffeomorphisms},
			Chicago Lectures in Mathematics. University of Chicago Press,
			Chicago, IL, 2011.

\bibitem{Sh} K.~Shinohara,
             \emph{On the minimality of semigroup actions on the interval which are $C\sp 1$-close to the identity},
             Proc.~Lond.~Math.~Soc.~(3) \textbf{109}~(2014), no.~5, 1175--1202.

\end{thebibliography}
\end{document}